\providecommand{\noopsort}[1]{}
\newbox\removebox
\newcommand\remove[2][blue]{%
\setbox\removebox=\ifmmode\hbox{$#2$}\else\hbox{#2}\fi%
\leavevmode
\rlap{\textcolor{#1}{\vrule height0.8ex depth-0.5ex width\wd\removebox}}%
\box\removebox
}
\long\def\bigremove#1{%
\par\setbox\removebox=\vbox{#1}%
\vbox{%
\vbox to0pt{\hbox{\tikz\draw[color=blue,thick] (0,0) -- (\wd\removebox,-\ht\removebox)  (\wd\removebox,0) -- (0,-\ht\removebox);}}
\box\removebox
}
}
\newcommand{\cCexp}{\cC^{\mathrm{exp}}}
\newcommand{\cCe}{\cC^{\mathrm{e}}}
\newcommand{\cQe}{\cQ^{\mathrm{e}}}
\newcommand{\cQexp}{\cQ^{\mathrm{exp}}}
\newcommand{\rad}{\operatorname{rad}}
\newcommand{\new}[1][]{_{\mathrm{new}#1}}
\def\VF{\mathrm{VF}}
\def\VG{\mathrm{VG}}
\newcommand{\RF}{{\rm RF}}
\def\ac{{
{\rm ac}}}
\def\RDef{\operatorname{RDef}}
\def\LPas{\cL_{\rm DP}}
\def\gTPas
\def\TPres
\def\Hen{\cS} 
\def\11{{\mathbf 1}}
\def\AA{{\mathbb A}}
\def\LL{{\mathbb L}}
\def\NN{{\mathbb N}}
\def\ZZ{{\mathbb Z}}
\def\cC{{\mathscr C}}
\def\cL{{\mathcal L}}
\def\cP{{\mathcal P}}
\def\cQ{{\mathcal Q}}
\def\cR{{\mathscr R}}
\def\cS{{\mathcal S}}
\def\llp{\mathopen{(\!(}}
\def\rrp{\mathopen{)\!)}}
\newtheorem{mainthm}
{Theorem}
\newtheorem{cor}[subsubsection]{Corollary}
\newtheorem{prop}[subsection]{Proposition}
\theoremstyle{definition}
\newtheorem{defn}[subsection]{Definition}
\newtheorem{defnsubsub}[subsubsection]{Definition}
\newtheorem{example}[subsubsection]{Example}
\newtheorem{def-prop}[subsubsection]{Proposition-Definition}
\newtheorem{def-thm}[subsubsection]{Theorem-Definition}
\newtheorem{def-lem}[subsubsection]{Lemma-Definition}
\newtheorem{convention}[subsubsection]
{Convention}
\theoremstyle{remark}
\newtheorem{remark}[subsection]
{Remark}
\newtheorem{remarksubsub}[subsubsection]
{Remark}
\Crefname{prop}{Proposition}{Propositions}
\Crefname{thm}{Theorem}{Theorems}
\Crefname{lem}{Lemma}{Lemmas}
\Crefname{cor}{Corollary}{Corollaries}
\Crefname{conj}{Conjecture}{Conjectures}
\Crefname{defn}{Definition}{Definitions}
\Crefname{defnsubsub}{Definition}{Definitions}
\theoremstyle{plain}
\newcommand{\ord}{\operatorname{ord}}
\definecolor{immi's color}{rgb}{.9,.3,0}
\begin{document}

\setcounter{tocdepth}{1} 

\author
{Raf Cluckers}

\address{Univ.~Lille,  CNRS, UMR 8524 - Laboratoire Paul Painlev\'e, F-59000 Lille, France, and
KU Leuven, Department of Mathematics, B-3001 Leu\-ven, Bel\-gium}
\email{Raf.Cluckers@univ-lille.fr}
\urladdr{http://rcluckers.perso.math.cnrs.fr/}

\author
{Immanuel Halupczok}
\address{Mathematisches Institut,
Universit\"atsstr. 1, 40225 D\"usseldorf,
Germany}
\email{math@karimmi.de}
\urladdr{http://www.immi.karimmi.de/en/}

\keywords{Motivic integration, motivic constructible functions, Grothendieck rings, integrability}

\thanks{The authors thank Michel Raibaut, Jorge Cely, Fran\c cois Loeser, Thomas Scanlon, Julien Sebag and an anonymous referee for useful feedback on the paper.
R.C. was partially supported by the European Research Council under the European Community's Seventh Framework Programme (FP7/2007-2013) with ERC Grant Agreement nr. 615722
MOTMELSUM and KU Leuven IF C14/17/083, and thanks the Labex CEMPI  (ANR-11-LABX-0007-01). The second author is partially supported by the research training group
\emph{GRK 2240: Algebro-Geometric Methods in Algebra, Arithmetic and Topology},
and by the individual research grant \emph{Archimedische und nicht-archimedische Stratifizierungen h\"oherer Ordnung}, both funded by the DFG}

\subjclass[2000]{Primary 14E18; Secondary 03C98}

\title[Evaluation  of motivic functions, integrability]{Evaluation of motivic functions, non-nullity, and integrability in fibers}

\begin{abstract}
We define an operation of evaluation at a point for motivic constructible (exponential) functions from the Cluckers-Loeser framework of motivic integration and show
that two such motivic functions are abstractly equal if and only if their evaluations at each point are the same.
We similarly characterise relative integrability  in terms of integrability in each fiber separately. These results simplify the mentioned frameworks of motivic integration and their usage.
\end{abstract}

\maketitle

\section{Introduction}

Motivic integration is inspired by $p$-adic integration, but by the lack of topological local compactness, it is not subject to standard measure theory. Instead, different, more abstract formalisms have been developed: by Kontsevich initially in a smooth setting \cite{Konts}; by Batyrev \cite{Batyrev} based on $\sigma$-algebras; variants by Denef and Loeser more generally on singular spaces, both geometric \cite{DLinvent} and arithmetic-geometric \cite{DL}; by Loeser, Nicaise and Sebag \cite{LoeserSeb, NicaSeba} in mixed and positive characteristic using N\'eron models; and, more recently, with Fubini results and Fourier transform included in the realm of motivic integration by Loeser and the first author \cite{CLoesI} -- \cite{CLexp}
and, by Hrushovski and Kazhdan \cite{HK}. Motivic integration has found striking applications going from the equality of Hodge numbers for birational Calabi-Yau variaties, to the study of  stringy invariants \cite{Batyrev,Veys:stringy,Baty-Moreau}, motivic Milnor fibers \cite{NicaSeba, HruLoeser:mono} and the log-canonical threshold \cite{MustJAMS}, and, to applications in the Langlands program \cite{CHL,YGordon,CGH2,Casselman-Cely-Hales}.

In this paper, we develop new insights in the treatment of \cite{CLoes,CLexp}.
\emph{A priori}, a motivic function in this formalism is an abstract geometric object. We show that those objects
can be considered as actual functions, in the sense that
they are determined by evaluation in points (Theorem \ref{thm:no:null:intro}). Similarly,
(abstract) relative integrability is determined fiber by fiber
(Theorem \ref{thm:integrability-intro}).  Both these insights lead to simplifications of the theory of motivic integration from \cite{CLoes,CLexp}, e.g. as given in Corollaries~\ref{cor:eval:hom:pull}, \ref{cor:eval:push}, \ref{cor:no:null:main} and \ref{cor:CelyRaib}.

\bigskip

To put this paper into a historic perspective, recall that
motivic constructible functions (of class $\cC$) and their integrals from \cite[Theorem 10.1.1 and Sections 13.2, 14.3]{CLoes} are an abstract framework of integration with Fubini properties, inspired by the $p$-adic functions introduced by Denef in \cite{Denef3}, see \cite[Section 1.5]{Denef1}.
These were extended both $p$-adically and motivically with additive characters and Fourier transforms in \cite{CLexp} (forming the class $\cCexp$) and uniformly through all $p$-adic fields in \cite{CHallp}. This went along with a growing understanding of the model theory of valued fields, in particular with cell decomposition results from \cite{Denef2,Pas} and later variants like \cite[Theorem 5.3.1]{CHallp} for all $p$-adic fields.
We hope that our work will be helpful for going from
uniform $p$-adic results in the style of  e.g.~\cite{CGH5,CHLR} to deeper motivic results as in \cite{Raib-motivic}.
The non-nullity results of this paper may also play a role in the study of motivic analogues of the Fundamental Lemma of the Langlands program as in \cite{LoesWyss}, and in a motivic variant of the $p$-adic results from \cite{CH:GrQp} on Kontsevich-Zagier style integral transformation rules.

\subsection{}
Let us sketch our main results in some more detail. For any motivic constructible (exponential) function $f$ as defined in \cite[Section 5.3]{CLoes}, resp.~\cite[Section 3.3]{CLexp} and recalled in Section \ref{sec:defn-proof}, we define the notion of the evaluation $f(x)$ of $f$ at a point $x$ in the domain $X$ of $f$ (see Definition \ref{defn:eval}), and prove the following non-nullity result. (A discussion of the terminology follows after Theorem \ref{thm:no:null:intro}.)



\begin{mainthm}
\label{thm:no:null:intro}
Let $X$ be an $\Hen$-definable 
set and let $f$ and $g$ be in $\cC(X)$. 
Then the following statements are equivalent.
\begin{enumerate}
\item\label{item:1:1}  One has  $f(x)=g(x)$ for all points $x$ on $X$.
\item\label{item:1:2}  One has $f=g$ in $\cC(X)$.
\end{enumerate}
Moreover, the same equivalence holds with $\cCe(X)$
and with $\cCexp(X)$ instead of $\cC(X)$.
In particular, one has that $f(x)=0$ for all points $x$ on $X$ if and only if $f=0$.
\end{mainthm}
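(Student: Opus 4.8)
The implication \eqref{item:1:2}$\Rightarrow$\eqref{item:1:1} is immediate once evaluation is shown to be well-defined (Definition~\ref{defn:eval}): equal motivic functions have equal evaluations. The content is the converse, and the plan is to prove it by induction on the dimension of $X$, reducing at each stage to a ``generic'' statement on a piece where the function has a simple normal form. First I would use the cell decomposition / presentation results underlying the Cluckers--Loeser framework (as in \cite[Theorem 10.1.1]{CLoes} and the corresponding statement in \cite{CLexp}) to partition $X$ into finitely many $\Hen$-definable pieces on each of which $f-g$ (working with $h:=f-g$, so the task is to show $h(x)=0$ for all $x$ implies $h=0$) is presented by an explicit finite combination of: a motivic measure of a definable family in the residue-field/value-group variables, products of $\ord$'s of definable functions, powers of $\lef$, and—in the exponential case—an additive character applied to a definable function into the residue ring. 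It suffices to treat one such piece, so I may assume $h$ has such a normal form globally.

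Next, the heart of the argument is a \emph{point-separation} statement for these normal forms: if such an explicit expression vanishes at every $\Hen$-point $x$ of $X$, then it is already the zero class in $\cC(X)$ (resp.\ $\cCe(X)$, $\cCexp(X)$). Here the plan is to fiber $X$ over its image in the value-group and residue-field sorts and argue that, over a fixed such ``coordinate'' fiber, the remaining dependence of $h$ is, up to the known relations in the Grothendieck ring, a genuine $\QQ$-linear combination of distinct characters and monomials $\lef^a$, $q^{b}$-type factors; a nonzero such combination must be detectably nonzero at some actual point because the relevant ``numerical'' data (the integers $\ord f_i(x)$, the residues, the finite residue-field cardinalities after specialisation) can be prescribed independently by choosing $x$ suitably—this is exactly where one invokes that $\Hen$ has ``enough points'' (every consistent type over the base is realised, or more concretely that the definable sets cut out by the cell conditions are nonempty whenever they are nonempty ``abstractly''). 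For the exponential layer one additionally needs the standard linear independence of additive characters, applied uniformly in the residue ring.

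The main obstacle—and the step I would spend the most care on—is making the reduction to normal form genuinely \emph{compatible with evaluation}: the cell decomposition is a statement about the abstract class $h$, but one must know that the pieces, the presenting functions $f_i$, and the character arguments all behave correctly under $x\mapsto h(x)$, i.e.\ that ``evaluate, then read off the normal form'' agrees with ``read off the normal form, then evaluate''. This requires that Definition~\ref{defn:eval} commute with the ring operations, with pushing forward along the projection that kills the value-group/residue variables (so that a motivic measure evaluates to the expected specialised count/measure), and with the formation of the character. Granting these compatibilities (which should be built into the definition or proved as elementary lemmas right after it), the induction on $\dim X$ closes: on the top-dimensional locus we use the generic normal-form argument, and on the lower-dimensional complement we apply the induction hypothesis. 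Finally, the $\cCe$ and $\cCexp$ variants follow by the same scheme, the only extra input being the character linear-independence noted above, and the ``In particular'' clause is the case $g=0$.
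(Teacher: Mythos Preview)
Your outline has the right overall shape---reduce the valued-field variables away via cell decomposition, then handle the remaining residue-field and value-group variables---and this is also how the paper proceeds. But the step you identify as ``the heart of the argument'' is exactly where the real difficulty lies, and your sketch does not address it.

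You write that a nonzero normal-form expression ``must be detectably nonzero at some actual point because the relevant `numerical' data \dots\ can be prescribed independently by choosing $x$ suitably---this is exactly where one invokes that $\cS$ has `enough points' (every consistent type over the base is realised \dots)''. This is precisely the step that fails. The class $\cS$ is \emph{not} elementary: the value group is always the standard integers $\ZZ$, never a nonstandard model, so logical compactness is unavailable in the value-group direction (the paper stresses this in Section~\ref{sec:non-el}). Concretely, after rectilinearization one is left with an expression of the form $\sum_{(a,b)\in L} c_{a,b}\, i^a\,\LL^{bi}$ on $\NN^r$, with the $c_{a,b}$ in a coefficient ring that may have torsion and zero-divisors. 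Knowing this vanishes for every \emph{standard} integer tuple $i$ does not allow a Vandermonde or growth-rate argument to force $c_{a,b}=0$, and there is no compactness to pass to a sufficiently generic or nonstandard $i$. (Your parenthetical about ``finite residue-field cardinalities after specialisation'' is also off: the residue fields here have characteristic zero, so no point-counting specialisation is available.)

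The paper's substitute for compactness is a concrete finite-difference trick: replace $f$ by $f(i_1+1,\dots)-f(i_1,\dots)$, which lowers the relevant degrees in $i_1$ and $\LL^{i_1}$, and run a careful lexicographic induction on $(r,d,e)$ that explicitly negotiates possible torsion in the coefficient ring (see Case~2 of the proof). The residue-field-only case (Case~1) \emph{does} use compactness, which is legitimate there because the residue fields form an elementary class by hypothesis. Your plan is missing this dichotomy and the finite-difference idea; without them, the ``point-separation'' step is a gap rather than a proof.
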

We now briefly explain the terminology used in Theorem \ref{thm:no:null:intro}: 
\begin{itemize}
 \item
The notion of $\Hen$-definable set is fixed in \Cref{defn:S} using the more general terminology of Section \ref{sec:def-set}, which is designed to deal uniformly with non-elementary classes $\cS$ of structures and with (points on) definable sets within those structures.
It corresponds to a certain notion of subassignments of 
\cite{CLoes}, see Remark \ref{rem:T-sub}. Apart from being shorter, it is also more handy for our use, especially since we need to change $\cS$ flexibly.
The relevance of non-elementary classes in this context is explained in Section \ref{sec:non-el}.

\item The rings $\cC(X)$, $\cCe(X)$ and $\cCexp(X)$ are recalled in Sections \ref{sec:C} and \ref{sec:Cexp}, from \cite[Section 5.3]{CLoes} and \cite[Sections 3.3 and 6.2]{CLexp}. Elements of $\cC(X)$ are called motivic constructible functions on $X$ in \cite{CLoes}. The ring $\cCexp(X)$ additionally contains a `motivic' additive character to enable motivic Fourier transform, and its elements are called motivic constructible exponential functions in \cite{CLexp}. The ring $\cCe(X)$ lies in between $\cC(X)$ and $\cCexp(X)$, with a motivic additive character on the residue field only.
\item The notion of a point $x$ on an $\Hen$-definable set $X$ of Definition \ref{defn:points} is similar to the one of \cite[Section 2.6]{CLoes}; essentially, it is an element of $X_K$ for some structure $K$  in $ \Hen$, see \Cref{defn:S-def}. The precise definition of $f(x)$ for a point $x$ on $X$ is given in Definition \ref{defn:eval}.
\end{itemize}

The ring $\cC(X)$ of motivic constructible functions on an $\Hen$-definable set $X$ is defined abstractly and not so much is known about it in general. The same holds for its variants $\cCe(X)$ and $\cCexp(X)$. The point of Theorem \ref{thm:no:null:intro} is that it permits one to view an element $f$ of $\cC(X)$ (and of $\cCe(X)$ and $\cCexp(X)$) 
as an actual function. In other words, knowing $f$ abstractly or knowing the function sending points $x$ on $X$ to $f(x)$ amounts to the same information. This leads to various possible simplifications of the presentation and usage of the Cluckers--Loeser framework of motivic integration.
For example, the delicate projection formula for motivic integrals shown recently in \cite[Theorem 1.1]{CelyRaib} (which refines the projection formulas from \cite[Theorem 10.1.1  A3, Proposition 13.2.1 (2)]{CLoes} and \cite[Theorem 4.1.1 A3]{CLexp}), follows easily from Theorem \ref{thm:no:null:intro};
see Corollary~\ref{cor:CelyRaib}.
As another example of a simplification,
using Theorem \ref{thm:no:null:intro}, one can now consider the abstractly defined pull-back $g^*(f)$ of an element $f$ of $\cC(Z)$ under an $\Hen$-definable function $g:X\to Z$ as an actual composition
(see Corollary \ref{cor:eval:hom:pull}). Note that, originally, $g^*(f)$ is defined more abstractly in $\cC(X)$, and similarly for the cases of $\cCe$ and $\cCexp$. Likewise, the relative motivic integral (along $g$) of a suitably integrable function $h$ in $\cC(X)$
 is determined by the motivic integral of $h$ restricted to each fiber of $g$ (see Definitions \ref{def:integrab}, \ref{def:integral} and Corollary \ref{cor:eval:push}).

\bigskip

The next theorem completes our main results. It states that checking relative integrability can also be done in a point-wise way.
This allows us to reprove the integrability condition obtained in \cite[Theorem 1.1 (1)]{CelyRaib} (see  (1) of Corollary~\ref{cor:CelyRaib} below).
The precise meaning of integrability in dimension $d$ comes from \cite[Section 14]{CLoes} and \cite[Section 4.3]{CLexp} and will be specified below in Definition \ref{def:integrab}.

Natural definitions of $g^{-1}(z)$ and $f_{|g^{-1}(z)}$ for $g:X\to Z$ an $\Hen$-definable function, $z$ a point on $Z$, and $f$ in $\cC(X)$ (or in $\cCe(X)$, or $\cCexp(X)$) are given In Section \ref{sec:eval}, again using general notions from Section \ref{sec:def-set} together with \Cref{defn:S} for $\cS$.

\begin{mainthm}[Relative integrability versus integrability in all fibers]\label{thm:integrability-intro}
Let $X$ and $Z$ be $\Hen$-definable sets, let $f$ be in $\cC(X)$, and let $g:X\to Z$ be an $\Hen$-definable function such that the fibers have (valued field) dimension $\leq d$ for some integer $d\geq 0$ (see \Cref{defndim}).
Then the following are equivalent, with integrability as defined in Definition \ref{def:integrab}. 
\begin{enumerate}
\item $f$ is integrable in relative dimension $d$ over $Z$ along $g:X\to Z$ (that is, in the fibers of $g$).
\item For each point $z$ on $Z$, the restriction $f_{|g^{-1}(z)}$ is integrable in dimension $d$.
\end{enumerate}
The same equivalence holds when $f$ lies in $\cCe(X)$ or in $\cCexp(X)$.
\end{mainthm}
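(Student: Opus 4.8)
\medskip
\noindent\textbf{Plan of proof.}
We argue first for $\cC$; the cases of $\cCe(X)$ and $\cCexp(X)$ then follow by the same reasoning, once the (residue field, resp.\ motivic) additive character is encoded by an auxiliary variable in the manner of \cite[Section 3.3]{CLexp}, using that Theorem~\ref{thm:no:null:intro} already covers all three classes. Throughout we use, without further comment, that the rings $\cC(\cdot)$, the operation of restriction to a piece, the formation of points and of fibers, and $\Hen$-definable isomorphisms are all compatible with base change, as is integration in the sense of \cite[Section 14]{CLoes} and \cite[Section 4.3]{CLexp}.

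The implication $(1)\Rightarrow(2)$ is the short one: it comes from the compatibility of relative integration with pullback. If $f$ is integrable in relative dimension $d$ along $g\colon X\to Z$, then for a point $z$ on $Z$, say $z\in Z_{K}$ with $K$ in $\Hen$, restricting $g$ over $z$ produces exactly the fiber $g^{-1}(z)$ with its (trivial) map to the point, the function $f_{|g^{-1}(z)}$, and fibers of valued field dimension $\le d$; stability of relative integrability under such a base change, which is part of the integration formalism of \cite[Section 14]{CLoes} and \cite[Section 4.3]{CLexp}, then shows that $f_{|g^{-1}(z)}$ is integrable in dimension $d$.

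For $(2)\Rightarrow(1)$ we unwind Definition~\ref{def:integrab}. First, by a cell decomposition of $X$ relative to $Z$ — available uniformly over the class $\Hen$ by compactness — one partitions $X$ into finitely many $\Hen$-definable pieces, each $\Hen$-definably isomorphic over $Z$ to a subset of $Z\times\VF^{m}\times\RF^{r}\times\ZZ^{s}$ on which $g$ becomes the coordinate projection, with $m\le d$ (using the fiber dimension hypothesis). Since all the data involved are compatible with such isomorphisms and with passing to points, it suffices to treat such a projection. Now the definition of integrability in relative dimension $d$ reduces, by integrating out one valued field variable at a time and then summing out $\RF^{r}$ (a finite sum in each fiber) and $\ZZ^{s}$, to two basic situations: the integrability over a base $W$ of some $\varphi\in\cC(W\times\VF)$ in the $\VF$-variable, and the summability over $W$ of some $h\in\cC(W\times\ZZ)$ in the $\ZZ$-variable. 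In each of these, a preparation over $W\times\VF$, resp.\ over $W\times\ZZ$, brings the function to a standard form on cells, and the integrability, resp.\ summability, condition turns into the requirement that on each cell the relevant value group exponent satisfy the appropriate strict inequality wherever the cell contributes a nonzero measure; equivalently, that a certain $\Hen$-definable locus $D\subseteq W$ of ``bad parameters'' be empty. Applying Theorem~\ref{thm:no:null:intro} to $\mathbf 1_{D}\in\cC(W)$ — and, where the reduction produces a more complex witness than a characteristic function, to that witness — one gets $D=\emptyset$ if and only if the witness vanishes at every point of $W$, which is precisely the statement that the fiberwise preparations (restrictions of the uniform one) satisfy the inequality everywhere, i.e.\ that every fiber restriction is integrable. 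Tracing these equivalences back through the reductions yields $(1)$.

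The main obstacle is the faithful unwinding of Definition~\ref{def:integrab} in that last step: abstract integrability is phrased via the existence of a bound — a supremum of the partial integrals in the partially ordered semiring of nonnegative motivic functions on $W$ — rather than via an equality, so one must genuinely produce, from the uniform cell decomposition, a candidate for the relative integral as an element of $\cC(Z)$ and then verify the defining relation (convergence of the partial integrals to it) pointwise by Theorem~\ref{thm:no:null:intro}. One also has to ensure that the residue field volume factors, which are carried along and can make a cell's contribution vanish, are absorbed into the same definability bookkeeping, and that the dimension-by-dimension induction implicit in ``relative dimension $d$'' preserves compatibility with points and with fibers at every stage.
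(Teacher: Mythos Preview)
Your outline has the right shape --- reduce to value-group/residue-field variables and then invoke Theorem~\ref{thm:no:null:intro} --- but the crucial step is misidentified. You write that after preparation, integrability ``turns into the requirement that \dots\ a certain $\Hen$-definable locus $D\subseteq W$ of `bad parameters' be empty,'' and you propose to apply Theorem~\ref{thm:no:null:intro} to $\mathbf 1_D$. This is not what happens. After rectilinearization (Proposition~\ref{lem:rec}), $f$ becomes a finite sum $\sum_{(a,b)\in L} c_{a,b}\, g^a\LL^{b\cdot g}$ with $c_{a,b}\in\cC(A_i)$ (or $\cCe(A_i)$), and the integrability criterion (Proposition~\ref{prop:crit}) is that the \emph{coefficients} $c_{a,b}$ vanish for all $(a,b)\notin\NN^{r_i}\times(\ZZ\setminus\NN)^{r_i}$. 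The exponents $(a,b)$ are fixed integers; what must vanish are genuine motivic constructible functions on the base, not characteristic functions of a definable set. Deducing $c_{a,b}=0$ from $c_{a,b}(z)=0$ for all points $z$ is exactly the nontrivial content of Theorem~\ref{thm:no:null:intro}; applying it to $\mathbf 1_D$ would be vacuous. Your parenthetical hedge (``where the reduction produces a more complex witness\dots'') is where the entire argument lives, and you have not said what that witness is or why it exists.

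Two further points. First, your final paragraph describes integrability as ``the existence of a bound --- a supremum of the partial integrals in the partially ordered semiring,'' and proposes to construct a candidate integral and verify convergence pointwise. That is not the definition used here (Definition~\ref{def:integrab} goes through $\mathrm I_ZC(X\to Z)$, ultimately via summability after $\LL\mapsto q>1$), and the paper never constructs the integral to prove integrability: Proposition~\ref{prop:crit} gives a direct combinatorial criterion, and one checks that criterion. Second, your treatment of $\cCexp$ (``encode the additive character by an auxiliary variable'') is too vague; the paper's reduction (Proposition~\ref{lem:exp-to-con}) requires a specific construction separating the values of $h$ into classes with pairwise differences of negative valuation, so that $\tilde f=E(h)c$ with $c\in\cCe$ and with the key property that $\tilde f=0$ iff $c=0$ and likewise for integrability. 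Without this, one cannot pass from $\cCexp$ to $\cCe$.
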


The notion of integrability from \Cref{def:integrab} is indirectly based on classical summability of series of positive real numbers, indexed by the value group, as is the case for the integrability notions in \cite{CLoes} and \cite{CLexp}. This uses that the value group of the fields in $\Hen$ is $\ZZ$ itself and not a nonstandard model of (the theory of) $\ZZ$.
In Section \ref{sec:int:revisited}, we give a simplified account of the integrability notions from \cite[Theorem 10.1.1 and Sections 13.2, 14.3]{CLoes} and \cite[Theorem 4.1.1]{CLexp}, based on direct criteria rather than on uniqueness and existence results.
The definitions of relative integrability from \cite[Theorem 10.1.1 and Sections 13.2, 14.3]{CLoes} and \cite[Theorem 4.1.1]{CLexp} thus become more natural and intuitive by Theorem \ref{thm:integrability-intro} and by the criteria from Section \ref{sec:int:revisited}. Note that integrating in fixed relative dimension $d$ as in \Cref{thm:integrability-intro} is key to general relative integration, as other relative dimensions can be reduced to this case up to taking a finite partition of $X$, see the discussion just below \Cref{def:integrab}.

\subsection{}\label{sec:non-el}
We now discuss the use and subtlety of non-elementary classes of valued fields.
\begin{itemize}
\item
In the context of motivic integration, one often uses non-elementary classes of valued fields. This is mainly to fix the value group to be $\ZZ$ and to use standard summation on it. In particular, geometric power series and their derivatives like $\sum_{i\ge 0} i^nr^{i}$ with real $r<1$ and integer $n\ge 0$ are classical when $i$ runs over $\ZZ$, but they would become unclear if $i$ were to live in a more abstract value group. Such concrete summation allows us to simplify sums over the value group, in comparison with e.g.~\cite{HK}, where other value groups are used.

\item One of the subtleties for proving Theorem \ref{thm:no:null:intro} is that the structures in $\Hen$ do not form an elementary class, so that logical compactness can not be used, and neither can infinitely large elements be used. This is different from the Hrushovski-Kazhdan framework of motivic integration \cite{HK}, where an elementary class of models is used throughout, and where non-nullity as given by Theorem \ref{thm:no:null:intro} is not an issue. For example, for a relation like $\LL^i[X] = [Y]$ in $\cC(\ZZ)$ where $i$ is the variable, running over the value group, $\LL$ is the class of the affine line over the residue field, and with $X$ and $Y$ some $\Hen$-definable sets in the residue field sort,  this makes a difference. Knowing $\LL^i[X] = [Y]$ in $\cC(\{i\})$ for each integer $i$ separately is very different than it would be to know it for each element $i$ of an elementary class of value groups, or, even just for a single infinitely large value of $i$.\footnote{This is a hypothetical example since the framework with $\cC$ does not yet exist in a variant with more general value groups.}
\end{itemize}

\subsection{}

Both Theorems \ref{thm:no:null:intro} and \ref{thm:integrability-intro} can easily be generalized to other classes of motivic functions than $\cC$, $\cCe$ and $\cCexp$ from  \cite[Section 5.3]{CLoes} and \cite[Sections 3.3 and 6.2]{CLexp}, like the ones listed in \cite[Section 3.1]{CLbounded} and corresponding variants $\cC^{\rm rat}$ (with more localisations) obtained from groupifying the semigroups $\cC^{\rm rat}_+$ from \cite[ Definition 3.1.6]{Kien:rational}. Furthermore, Theorem \ref{thm:no:null:intro} can be used to simplify the frameworks of \cite{CLoes,CLexp,CLbounded,Kien:rational} and to simplify issues about null-functions and parameter integrals from e.g.~\cite{Casselman-Cely-Hales,CelyRaib,CHLR,Raib-motivic}.

\subsection{Description of the ideas}\label{sec:desc}


Our notion of evaluation and the related notions are natural and based on changing theories by adding constant symbols and passing to complete theories (see Section \ref{sec:eval}).  Adding constants and changing theories accordingly is standard in model theory, and is often done similarly, e.g.~for working with types. We develop some general terminology in Section \ref{sec:def-set}, where we allow non-elementary classes of structures to work with uniformly definable sets.

The proof of Theorem \ref{thm:no:null:intro} for $\cC$ and $\cCe$ is reduced to the case where $X$ does not involve valued field coordinates and thus only residue field and value group coordinates. This reduction is done using Proposition \ref{prop:twisted:boxes} which relies on work of Pas \cite{Pas}. The case for $\cCexp$ is reduced to $\cCe$ by Proposition \ref{lem:exp-to-con}.  On the residue field level, one uses logical compactness to show non-nullity (see Case 1 of the proof of Theorem \ref{thm:no:null:intro}). This is allowed since we impose in \Cref{defn:S} that the residue fields of the structures in $\Hen$ run over all models of a theory and thus form an elementary class.  For the value group (which is $\ZZ$) a more delicate reasoning is needed.
Essentially, we use basic tricks like taking the difference of $f(i+1)$ with $f(i)$, which makes certain degrees (in $i$ and in $\LL^i$) go down so that induction comes to help (see Case 2 of the proof of Theorem \ref{thm:no:null:intro}).

For the proof of Theorem \ref{thm:integrability-intro}, we again reduce from $\cCexp$ to $\cCe$ using Proposition \ref{lem:exp-to-con}.  Next, we reduce to the case that $X$ does not involve valued field coordinates by Propositions \ref{prop:twisted:boxes} and \ref{prop:VF-RF},
so that the occurring dimension (in the valued field) is  $d=0$. By the proofs of \cite[Theorem 10.1.1]{CLoes} and \cite[Theorem 4.1.1]{CLexp}, integrability conditions always come from summation over the value group via summability conditions of real numbers, by replacing $\LL$ (the class of the affine line in the residue field sort) by real numbers $q>1$. This allows us to use a criterion about summation of motivic functions of class $\cC$ and $\cCe$ over $\ZZ^R$  (see Proposition \ref{prop:crit}), and then the proof of Theorem \ref{thm:integrability-intro} can be finished by an application of Theorem \ref{thm:no:null:intro}.

Familiarity with \cite{CLoes,CLexp} is necessary to understand full details; nevertheless, a more global reading of this paper is also possible without such familiarity.


\section{Definable sets within collections of structures} 
\label{sec:def-set}
This section is completely general, and introduces handy terminology in first order logic to work simultaneously with all structures in a given (not necessarily elementary) class. From Section \ref{sec:defn-proof} on, we will put ourselves back in the context of henselian valued fields.

In this section we define a variant of the notion of `definable  set'  which specifies at the same time in which structures they live. Classically in model theory one considers either a fixed structure, or, one works with an elementary class of structures. We allow non-elementary classes of structures, and we make that explicit in our notation.
In particular, we will speak of $\cS$-definable rather than $\cL$-definable with $\cS$ a collection of $\cL$-structures for some language $\cL$.
While some related objects and manipulations could be defined in terms of the common theory of the structures $\cS$, others, like the notion of ``points on definable sets'' from \Cref{defn:points} depend finely on the collection $\Hen$ itself.

In this section, let  $\Hen$ be a nonempty collection of $\cL$-structures in some language $\cL$ (a first order language, as usual in model theory). We will, at the beginning of Section \ref{sec:defn-proof}, fix a language $\cL$ and a collection $\cS$ of $\cL$-structures according to \Cref{defn:S}.

\begin{defn}\label{defn:S-def}
By an $\Hen$-definable set is meant a collection of sets $X = (X_K)_{K \in \Hen}$ such that there exists an $\cL$-formula $\varphi$ with $\varphi(K) = X_K$ for all $K$ in $\Hen$ and where $\varphi(K)$ stands for the set of tuples in the  $\cL$-structure $K$ that satisfy $\varphi$.

By an $\Hen$-definable function $g:X\to Z$ between $\Hen$-definable sets $X$ and $Z$ is meant a collection of functions $g_K:X_K\to Z_K$ for $K$ in $\Hen$ such that the collection of the graphs of the $g_K$ forms an $\Hen$-definable set.
\end{defn}
Basic terminology of set theory can and will be used: An $\Hen$-definable bijection between $\Hen$-definable sets $X$ and $Z$ is an  $\Hen$-definable function $g:X\to Z$ such that $g_K$ is a bijection for each $K$ in $\Hen$, and similarly for $\Hen$-definable injections, surjections, subsets, Cartesian products, pre-images, fiber products,  etcetera. Slightly more subtly, we call a collection of $\cS$-definable subsets $X_i$ of $X$ a partition of $X$, if for each $K$ in $\cS$ the nonempty sets among the $X_{i,K}$ form a partition of $X_K$, and, for each $i$ there is at least one structure $K$ in $\cS$ such that $X_{i,K}$ is nonempty.

\begin{defn}\label{defn:points}
For an $\Hen$-definable set $X$, a pair $x=(x_1,K_1)$ with $x_1$ in $X_{K_1}$ and  $K_1$ in $\Hen$ is called a point on 
$X$.
\end{defn}

\begin{defn}\label{defn:L(x)}
Given a point $x=(x_1,K_1)$ on an $\Hen$-definable set $X$, write $\cL(x)$ for the language $\cL$ expanded by constant symbols for the entries of the tuple $x_1$, and write $\Hen(x)$ for the collection of $\cL(x)$-structures which are $\cL(x)$-expansions of $\cL$-structures in $\Hen$ that  are elementarily equivalent to the $\cL(x)$-structure $K_1$ (where in $K_1$, the new constants are interpreted by $x_1$).
\end{defn}

\begin{defn}\label{defn:x}
Let $x=(x_1,K_1)$ be a point on an $\Hen$-definable set. By a harmless abuse of notation, we denote by $\{x\}$ the $\Hen(x)$-definable set sending $K$ in $\Hen(x)$ to  $\{x\}_K := \{x_{1,K}\}$, where $x_{1,K}$ is the interpretation in the $\cL(x)$-structure $K$ of the tuple of constant symbols introduced for $x_1$.
Likewise, if $g:X\to Z$ is an $\Hen$-definable function and $x=(x_1,K_1)$ is a point on $X$, then we write $g(x)$ for the corresponding point $(z_1,K_1)$ on $Z$, with
$z_1=g_{K_1}(x_1)$.
(With a similarly harmless abuse of notation, one sometimes just writes $x$ instead of $\{x\}$.)
\end{defn}

Let us fix some further notation. 
Consider a point $x$ on an $\Hen$-definable set $X$. Then any $\Hen$-definable set $Z$
naturally determines an $\Hen(x)$-definable set, which we denote by $Z_{\Hen(x)}$. In detail, $Z_{\Hen(x)}$ associates to $K$ in $\Hen(x)$ the set $Z_{K_{|\cL}}$, where $K_{|\cL}$ in $\Hen$ is the $\cL$-reduct of $K$. We call $Z_{\Hen(x)}$ the base change of $Z$ to $\Hen(x)$. (Note that the notation $Z_{\Hen(x)}$ is in close analogy to the usual notation for base change in algebraic geometry.)
Similarly, any $\Hen$-definable function $g:Z\to Y$ between $\Hen$-definable sets determines an $\Hen(x)$-definable function $g_{\Hen(x)}:Z_{\Hen(x)}\to Y_{\Hen(x)}$.
We use natural related notation, for example, for a point $y$ on $Y$, we write  $g^{-1}(y)$ for the obvious $\Hen(y)$-definable subset of $Z_{\Hen(y)}$, that is, for the pre-image of $\{y\}$ under $g_{\Hen(y)}$
(which could more formally also be denoted by $(g_{\Hen(y)})^{-1}(\{y\})$).  

When $p:W\subset X\times Y \to Y$ is the projection to the $Y$-coordinate with $X,Y,W$ some $\Hen$-definable sets,  we sometimes write $W_y$ for
$p^{-1}(y)$ (which formally is an $\Hen(y)$-definable subset of $W_{\Hen(y)}$).
If furthermore $h:W\to V$ is an $\Hen$-definable function, then we write $h(\cdot,y)$ for the restriction of $h_{\Hen(y)}$ to $W_y$.

\section{Definition of evaluation and corollaries of the main results}\label{sec:defn-proof}

After recalling notation from \cite{CLoes,CLexp}, we define the evaluation $f(x)$ of a motivic constructible (exponential) function $f$ at a point $x$ and introduce some related notation like  
$f(y,\cdot)$, in line with the notation of Section \ref{sec:def-set}.



\subsection{Notation from \cite{CLoes,CLexp} with small tweaks}\label{sec:recall}

We start by recalling some notation from \cite[Section 2]{CLoes}, \cite[Section 2]{CLexp}. There are some small differences however: we work with  $\Hen$-definable sets instead of some subassignments (see \Cref{rem:T-sub}), and we write $\VF^m\times \RF^n\times \VG^r$, resp.~$\cCexp(X)$ and $\cCe(X)$ instead of $h[m,n,r]$, resp.~$\cC(X)^{\mathrm{exp}}$ and $\cC(X)^{\mathrm{e}}$. 

Let $\cL$ be an expansion by constant symbols of the Denef-Pas language $\LPas$ (in any of the three sorts).
Recall that $\LPas$ is the language with three sorts  $\VF$, $\RF$ and $\VG$ (for valued field, residue field and value group), a relation symbol for the graph of the valuation map from nonzero elements in $\VF$ to $\VG$, an angular component map $\ac$ from $\VF$ to $\RF$, the ring language on $\VF$, 
the ring language on $\RF$, and the language of ordered groups on $\VG$. Recall that an angular component map on a valued field $K$ with residue field $k$ is a multiplicative map $K\to k$ which on units in the valuation ring of $K$ coincides with the projection modulo the maximal ideal.


From now on, and also in Theorems \ref{thm:no:null:intro} and \ref{thm:integrability-intro}, we let $\cS$ be according to the following convention. (Note that in Section \ref{sec:def-set}, $\cS$ was allowed to be more general.)

\begin{convention}\label{defn:S}
We assume that $\cL$ is an expansion by constant symbols of the Denef-Pas language, and that $\Hen$ is a nonempty collection of $\cL$-structures of the form $K=(k\llp t\rrp,k,\ZZ)$, where each such $k$ is a field of characteristic zero and where $K$ carries the natural $\cL$-structure with $\ac(t)=1$ and $\ord(t)=1$. We assume moreover that the collection consisting of the residue fields $k$ of the structures in $\Hen$
forms an elementary class.
\end{convention}      

Recall that a collection is called an elementary class if there is a theory $T$ such that the models of $T$ form precisely the collection.

From here on, $\cS$-definable sets and functions are thus as in \Cref{defn:S-def} for some $\cL$ and $\cS$ as in \Cref{defn:S}.  We consider $\VF$, $\VG$ and $\RF$ as $\Hen$-definable sets.
In detail, $\VF$ is the collection $(\VF_K)_{K \in \Hen}$, where, for $K=(k\llp t \rrp, k , \ZZ)$ in $\cS$,
one has $\VF_K = k\llp t\rrp$, and analogously for $\VG$ and $\RF$. 

\begin{remarksubsub}\label{rem:T-sub}
Let us compare our notion of $\cS$-definable sets with subassignments from \cite{CLoes}. The most flexible variant of subassignments comes from \cite[Section 2.7]{CLoes} and is called `$\cL$-definable $T$-subassignment', with  $k_1$ a choice of base field of characteristic zero, $S_1$ a subring of $k_1\llp t\rrp$ and $T$ a theory in the language of rings with constant symbols for the elements of $k_1$, and, where $\cL$ is the expansion of $\LPas$ with constant symbols for elements of $k_1$ in the residue field sort and for elements in the subring $S_1$ of $k_1\llp t\rrp$ in the valued field sort. This expansion is denoted by $\LPas(S_1)$ in \cite[Section 2.7]{CLoes}. The corresponding $\cS$ would consist of all $\cL$-structures of the form $K=(k\llp t\rrp,k,\ZZ)$, where  $k$ contains $k_1$ and with subring $S_1$ of $k\llp t\rrp$. In \Cref{defn:S} we allow $\cL$ to expand $\LPas$ by constant symbols in a slightly more general way. All results and definitions of \cite{CLoes} and \cite{CLexp} extend naturally to this set-up (indeed, \cite{CLoes} and \cite{CLexp} work throughout with family versions in the sense of relative versions over definable subassignments serving as parameter spaces, which implies variants with general constant symbols by standard model theoretic techniques).   A reader who wants to avoid this extra generality may use the additional assumption throughout the paper that  $\cS$ comes precisely from some choice of  $k_1$, $S_1$ and $T$ as in \cite[Section 2.7]{CLoes} as explained in the above discussion.
\end{remarksubsub}


\begin{remarksubsub}
\label{rem:comparison:fiber}
Definition \ref{defn:points} of points on an $\Hen$-definable set $X$ corresponds to the definition from \cite[Section 2.6]{CLoes} of points on  
subassignments. However, the way we work with a point $x = (x_1, K_1)$ is different: For us, $\cL(x)$ is the expansion by constants for $x_1$ and $\Hen(x)$ consists of structures $\cL(x)$-elementarily equivalent to $K_1$, whereas in \cite[Section 2.6]{CLoes} one uses the expansion
$\cL(K_1)$ of $\cL$ by constants for all elements of $K_1$ and the collection $\Hen(x)' \subset \Hen$ of those structures in $\cS$ that contain $K_1$ (equipped with the natural $\cL(K_1) $-structure). Similarly, in \cite[Section 2.6]{CLoes}, a fiber of an $\Hen$-definable $g\colon Y \to X$ above a point $x$ on $X$ has a different meaning than the $\cS(x)$-definable set that we denote by  $g^{-1}(x)$ (see Section \ref{sec:def-set}).
This also makes a difference for the notion of evaluation of motivic functions at points; see Definition \ref{defn:eval} and Remark~\ref{rem:comparison:eval}. 
\end{remarksubsub}


\subsection{The class $\cC$}\label{sec:C}

Given an $\Hen$-definable set $X$, we will now define the ring $\cC(X)$ of constructible motivic functions on $X$, in line with the rings $\cC(X)$ and $\cC(X,(\cL,T))$ in Sections 5.3 and  16.1 of \cite{CLoes}. The elements of $\cC(X)$ are more informally called functions of $\cC$-class on $X$, and, more formally, they are called motivic constructible functions.

Let $\AA$ be the localisation of the polynomial ring $\ZZ[\LL]$ (in one formal variable $\LL$) by $\LL$ and by all elements of the form $(1-\LL^i)$ for integers $i<0$. Symbolically, one can write $\AA= \ZZ[\LL,\LL^{-1},\bigcup_{i<0} \frac{1}{1-\LL^{i}}]$.

Define $\cP(X)$ as the subring of the ring of all functions sending points $x=(x_1,K_1)$ on $X$ to elements of $\AA$, generated by the following functions
\begin{enumerate}
\item The constant function sending each $x$ to $a$, with  $a$ an element of $\AA$.

\item The function sending $x$ to $\LL^{\alpha(x)}$, where $\alpha:X\to\VG$ is an $\cS$-definable function, and, where $\alpha(x)$ is seen as element of $\ZZ$.

\item  The function sending $x$ to $\alpha(x)$, where $\alpha:X\to\VG$ is an $\cS$-definable function, and, where $\alpha(x)$ is seen as element of $\ZZ$.
\end{enumerate}

Next, define the group $\cQ(X)$ as the quotient of the free abelian group generated by symbols $\lfloor Y\rfloor$ for $\cS$-definable sets $Y\subset X\times \RF^n$ with $n \ge 0$, by the relations
\begin{enumerate}
\item $\lfloor Y\rfloor = \lfloor Y'\rfloor $ if there exists an $\cS$-definable bijection $Y\to Y'$ over $X$ (that is, making a commutative diagram with the coordinate projections from $Y$ and $Y'$ to $X$).

\item $\lfloor Y\cup Y'\rfloor  = \lfloor Y\rfloor  + \lfloor Y'\rfloor $ for disjoint $\cS$-definable subsets $Y$ and $Y'$ of $X\times \RF^n$.
\end{enumerate}

We denote the equivalence class of an element $\lfloor Y\rfloor$ inside $\cQ(X)$ by $[Y]$.
In fact, the group $\cQ(X)$ becomes a ring with product induced by the fiber product over $X$. In detail, one defines $[Y]\cdot[Y']$ by $[Y\times_X Y']$, with $Y\subset X\times \RF^n$ and $Y'\subset X\times \RF^{n'}$ and $Y\times_X  Y'\subset X\times \RF^{n+n'}$ the fiber product of  the projections $Y\to X$ and $Y'\to X$.

Note that the ring $\cQ(X)$ is denoted by $K_0(\RDef_X)$ in \cite[Section 5.1.2]{CLoes}.

Write $\cP^0(X)$ for the subring of $\cP(X)$ generated by the constant function $\LL$, and, by all characteristic functions $\11_Y$ of $\cS$-definable sets $Y\subset X$. There is an injective ring homomorphism $\cP^0(X)\to\cQ(X)$ sending $\LL$ to $[X\times \RF]$ and sending $\11_Y$ to $[Y]$.

 Finally, define $\cC(X)$ as the tensor product of rings
 $$
 \cC(X) := \cQ(X) \otimes_{\cP^0(X)} \cP(X).
 $$

Without harm we write $\LL$ for  $[\RF\times X]$ in $\cC(X)$. Thus, $\LL$ is the class of the affine line in the residue field, over $X$.

\subsection{The classes $\cCe$ and $\cCexp$}\label{sec:Cexp}

Given an $\Hen$-definable set $X$, we will now define the ring $\cCexp(X)$ of constructible exponential motivic functions on $X$, and also the smaller ring $\cCe(X)$, in line with \cite[Sections 3 and 6.2]{CLexp}. Elements of $\cCexp(X)$ are more simply called functions of class $\cCexp$ on $X$; more formally, they are called motivic constructible exponential functions.  We start with defining  $\cQexp(X)$ and $\cQe(X)$.

Define the group $\cQexp(X)$ as the quotient of the free abelian group generated by symbols $\lfloor Y,\xi,h \rfloor$ for $\cS$-definable sets $Y\subset X\times \RF^n$ with $n\ge 0$ and $\cS$-definable functions $\xi:Y\to \RF$, $h:Y\to\VF$, by the relations
\begin{enumerate}
\item $\lfloor Y,\xi,h\rfloor  = \lfloor Y',\xi',h'\rfloor $ if there exists an $\cS$-definable bijection $f:Y\to Y'$ over $X$  such that $\xi'\circ f= \xi$ and $h'\circ f = h$.

\item $\lfloor Y\cup Y', \xi\cup \xi',h\cup h'\rfloor   = \lfloor Y,\xi,h\rfloor  + \lfloor Y',\xi',h'\rfloor $ for disjoint $\cS$-definable subsets $Y$ and $Y'$ of $X\times \RF^n$, and where $\xi\cup \xi':Y\cup Y'$ sends $y$ to $\xi(y)$ if $y\in Y$ and to $\xi'(y)$ if $y\in Y'$, and similarly for $h\cup h'$.

\item $\lfloor Y,\xi,h_1+h_2\rfloor  = \lfloor Y,\xi+\bar h_1,h_2\rfloor $ where the $h_i:Y\to\VF$ are $\cS$-definable functions for $i=1,2$ such that furthermore $\ord h_1(y) \ge 0$ for all $y$ in  $Y$, and where $\bar h_1$ the reduction of $h_1$ modulo the maximal ideal.

\item $\lfloor Y\times \RF,\xi+p,h\rfloor   =0$ when $h : Y\times \RF \to \VF$ and $\xi: Y\times \RF \to \RF$ both factorize through the projection $Y\times \RF \to Y$ and where $p:Y\times \RF\to\RF$ is the projection.
\end{enumerate}

Again, we denote the equivalence class of an element $\lfloor Y,\xi,h\rfloor$ inside $\cQexp(X)$ by $[Y,\xi,h]$.

The group $\cQexp(X)$ becomes a ring with product induced by the fiber product over $X$. In detail, one defines $[Y,\xi,h]\cdot[Y',\xi',h']$ by $[Y\otimes_X Y',\xi\circ p_Y +  \xi'\circ p_{Y'},h\circ p_Y + h'\circ p_{Y'}]$, with $Y\otimes_X  Y'\subset X\times \RF^{n+n'}$ the fiber product of  the projections $Y\to X$ and $Y'\to X$ and with $p_Y:Y\otimes_X Y'\to Y$ and $p_{Y'}:Y\otimes_X Y'\to Y'$ the projections.

We define $\cQe(X)$ as the subring of $\cQexp(X)$  generated by the elements $[Y,\xi,0]$. 
Note that the ring $\cQexp(X)$ is denoted by $K_0(\RDef_X^{\rm exp})$ in \cite[Section 3]{CLexp}, and $\cQe(X)$ by $K_0(\RDef_X^{\rm e})$ in \cite[Section 6.2]{CLexp}.

Finally define the rings
$$
 \cCexp(X) := \cQexp(X) \otimes_{\cQ(X)} \cC(X)
 $$
 and
$$
 \cCe(X) := \cQe(X) \otimes_{\cQ(X)} \cC(X).
 $$
There are natural inclusions of rings
\begin{equation}\label{eq:incl}
\cC(X)\to\cCe(X)\to\cCexp(X),
\end{equation}
by \cite[Lemma 3.3.1]{CLexp}.

\subsection{Integrability}\label{sec:integrab}

We rely on \cite{CLoes,CLexp} to fix our notion of integrability, without recalling the full details. In fact, after Theorem 2 and our criteria from Propositions \ref{prop:crit} and \ref{lem:exp-to-con}, the original definition of integrability for functions of the classes $\cC$, $\cCe$ and $\cCexp$ in \cite{CLoes,CLexp}  seem to become less important.


\begin{defnsubsub}[Dimension]\label{defndim}
Let $d\ge 0$ be an integer. For an $\Hen$-definable set $X$ with $X\subset \VF^m\times \RF^n\times \VG^r$, say that $X$ has dimension at most $d$ if for each $K$ in $\cS$, there exists a $\VF_K$-linear function $f:\VF_K^m \to \VF_K^d$ such that $f$ has finite fibers on $A_K$, with $A_K$ the image of $X_K$ under the coordinate projection $X_K\to \VF_K^m$. Say that $X$ has dimension $d$ if moreover $f(A_K)$ has nonempty interior in $\VF_K^d$ for some $K$ in $\cS$ and some choice of linear function $f$, with the valuation topology on $\VF_K^d$. If an  $\cS$-definable function $g:X\to Z$ is given, say that $X$ has relative dimension at most $d$ (along $g$) if the fibers of $g$ have dimension at most $d$.
\end{defnsubsub}

\begin{defnsubsub}[Integrability]\label{def:integrab}
Let $f$ be in $\cC(X)$, in $\cCe(X)$ or in $\cCexp(X)$ and let $g:X\to Z$ be an $\Hen$-definable function whose fibers have dimension at most $d$. One calls $f$ integrable in relative dimension $d$ over $Z$ along $g$ (namely in the fibers of $g$) if the class of $f$ in $C^{d}(X\to Z)$ lies in $\mathrm{I}_ZC(X\to Z)$, resp.~the class of $f$ in $C^{d}(X\to Z)^{\rm exp}$ lies in $\mathrm{I}_ZC(X\to Z)^{\rm exp}$ with notation from \cite[Section 14]{CLoes}, resp.~\cite[Section 4.3]{CLexp}.
One simply says integrable in dimension $d$ (instead of integrable in relative dimension $d$ over $Z$ along $g$) if $Z_K$ is a singleton for each $K$ in $\Hen$.
\end{defnsubsub}

We will give self-contained criteria for the notion of relative integrability in Propositions \ref{prop:crit} and \ref{lem:exp-to-con}.
The more general notion of integrability of $f$ in relative dimension $d'$ over $Z$ for a given $d'$ with $0\le d'\le d$ requires in particular that the support of $f$ is contained in an $\cS$-definable $X'$ set of relative dimension at most $d'$ over $Z$. But in such a case one can usually simply restrict $f$ to $X'$ and work as in \Cref{def:integrab} again, with $d=d'$. This more general notion of integrability (namely in smaller relative dimensions) is not used outside of Proposition \ref{prop:VF-RF}, where its meaning is recalled. 

\subsection{Evaluation in points}\label{sec:eval}


Let  $\cL$ and $\Hen$ be as in Section \ref{sec:recall}, let $Z$ be an $\Hen$-definable set, and let $z$ be a point on $Z$. We use notation from Sections \ref{sec:def-set} and \ref{sec:recall}.
By the definition of constructible motivic functions, any $f$ in $\cC(X)$ is built up from some $\Hen$-definable sets and functions, and thus determines a constructible motivic function $f_{\Hen(z)}$ in $\cC(X_{\Hen(z)})$, where $\cS(z)$ is as in Section \ref{sec:def-set}. Indeed, this is well-defined since going to $\Hen(z)$, all existing Grothendieck ring relations are preserved, and it clearly yields a ring homomorphism from $\cC(X)$ to $\cC(X_{\Hen(z)})$.
Similarly, $f$ in $\cCe(X)$ or in $\cCexp(X)$ determines $f_{\Hen(z)}$ in $\cCe(X_{\Hen(z)})$ resp.~in $\cCexp(X_{\Hen(z)})$.

Recall that given an $\Hen$-definable set $X$ and an $\Hen$-definable subset $Y \subset X$, we have a notion
of restriction of a constructible function $f$ in $\cC(X)$ to $f|_Y$ in $\cC(Y)$ (and analogously for $\cCe(X)$ and $\cCexp(X)$). More formally, $f|_Y$ equals the pull-back $g^*(f)$ of $f$ along the
inclusion map $g: Y \to X$. We define the evaluation of a motivic function at a point as the restriction of the function to the corresponding one-point set:

\begin{defnsubsub}[Evaluation]\label{defn:eval}
For  $f$ in $\cC(X)$ and $x$ a point on $X$, we define $f(x)$ in $\cC(\{x\})$ as the restriction
of $f_{\Hen(x)}$ to $\{x\}$.
(Note that $\{x\}$ is an $\Hen(x)$-definable subset of $X_{\Hen(x)}$.)
One defines $f(x)$ likewise when $f$ lies in $\cCe(X)$ or in $\cCexp(X)$.
\end{defnsubsub}

Note that evaluation at a given point is a ring homomorphism (since the pull-back is).


\begin{remarksubsub}[Comparison of evaluation with notions from \cite{CLoes,CLexp}]\label{rem:comparison:eval}
Given a point $x$ on an $\cS$-definable set $X$ and a function $f$ in $\cC(X)$, $\cCe(X)$, or $\cCexp(X)$,
there is also a notion of evaluation of $f$ at $x$
in \cite{CLoes,CLexp}, denoted by $i^*_x(f)$ in Section 5.4 of \cite{CLoes} and in (4.3.1) of \cite{CLexp}. This notion is defined differently than in our Definition~\ref{defn:eval}, instead using $\Hen(x)'$ from
Remark \ref{rem:comparison:fiber}; this makes a difference for the ring in which the value of $f$ at $x$ lies. In detail, if we write $\{x\}'$ for the $\Hen(x)' $-definable set given by a point $x$ on $X$, then $i^*_x(f)$ from \cite{CLoes,CLexp}  lies in $\cC(\{x\}')$, where $f(x)$ from \Cref{defn:eval} lies in $\cC(\{x\})$, and similarly for $\cCe$ and  $\cCexp$.
%
\end{remarksubsub}

For an $\Hen$-definable function $g:X\to Z$ we denote by $f_{|g^{-1}(z)}$ the restriction of $f_{\Hen(z)}$ to $g^{-1}(z)$.
When $p:X\subset Y\times Z \to Z$ is the projection to the $Z$-coordinate with $X,Y,Z$ some $\Hen$-definable sets, $z$ a point on $Z$, and
$f$ in $\cC(X)$, then we write $f(\cdot,z)$ for the restriction of  $f_{\Hen(z)}$ to $X_z$ with $X_z$ being $p^{-1}(z)$ as at the end of \Cref{sec:def-set}; note that $f(\cdot,z)$ lies  in $\cC(X_z)$.
We use similar notation for $f$ in $\cCe(Z)$ and in $\cCexp(Z)$.

\begin{example}\label{ex:null:i^*}
Here is an example where Theorem \ref{thm:no:null:intro} would fail for the notion of $i^*_x(f)$ from Section 5.4 of \cite{CLoes} instead of our notion of $f(x)$ (see Remark \ref{rem:comparison:eval} for notation). Let $\cS$ be a collection of structures as before, thus in particular of the form $K=(k\llp t\rrp,k,\ZZ)$, where we require moreover that all occurring $k$ are algebraically closed and where we let $\cL$ be $\LPas$ (without extra constant symbols). Let $X$ be $\RF^\times$, that is, the $\cS$-definable set which is the multiplicative group of the residue field. Consider the definable set $Z\subset \RF\times X$, given by the formula $z^2=x$, for variables $(z,x)$ running over $\RF\times X$, and, consider the class $[Z]$ of $Z$ in $\cC(X)$. Let $f$ be $[Z]-2$ in $\cC(X)$. Then, with notation from Remark \ref{rem:comparison:eval}, for each point $x$ on $X$ we have $i^*_x(f)=0$. Indeed, in $i^*_x(f)$, one has constant symbols in particular for each square root of $x$, so that there is a definable bijection from the set of square roots of $x$ to the two point set $\{0,1\}$, for nonzero $x$ in an algebraically closed field. However, $f$ is nonzero in $\cC(X)$. Indeed, 
one has $f(-1)\not=0$ by Proposition 5 of \cite[Section 5]{LiuSebag}, with our evaluation of $f$ at $-1$ from \Cref{defn:eval}. 
\end{example}

\subsection{Some consequences of the main results}

Recall from \cite[Section 5.4]{CLoes}, \cite[Section 3.4]{CLexp} (or from the overview \cite[Section 2]{Raib-motivic}) that, for any $\Hen$-definable function $g:X\to Z$, there are natural pull-back maps
$g^*:\cC(Z)\to\cC(X)$, $g^*:\cCe(Z)\to\cCe(X)$,  and $g^*: \cCexp(Z)\to\cCexp(X)$, which are defined in \cite[Section 5.4]{CLoes}, \cite[Section 3.4]{CLexp} as an abstract form of composition. By Theorem \ref{thm:no:null:intro}, this can  now be seen as a concrete form of composition, as follows.

\begin{cor}\label{cor:eval:hom:pull}
Let $X$, $Z$ be $\Hen$-definable sets, let $g:X\to Z$ be an $\Hen$-definable function and let $f$ be in $\cC(Z)$.
Then the pull-back $g^*(f)$ (as defined in \cite[Section 5.4]{CLoes}) is the unique function in $\cC(X)$ satisfying
\begin{equation}\label{eq:pullback}
(g^*(f))(x) = f(g(x))_{\cS(x)}
\end{equation}
for each point $x$ on $X$, where $f(g(x))_{\cS(x)}$ is  the base change of $f(g(x))$ to ${\cS(x)}$ as defined just below \Cref{defn:x}.
The same statement holds with $\cCe$ or with $\cCexp$ instead of $\cC$, with pull-back as defined in \cite[Section 3.4]{CLexp}.
\end{cor}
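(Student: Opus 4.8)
The plan is to derive uniqueness from Theorem \ref{thm:no:null:intro} and then to verify that $g^*(f)$ itself satisfies \eqref{eq:pullback} by unwinding the definitions of Section \ref{sec:eval}. For uniqueness: if $f_1,f_2\in\cC(X)$ both satisfy \eqref{eq:pullback}, then $f_1(x)=f_2(x)$ for every point $x$ on $X$, hence $f_1=f_2$ by Theorem \ref{thm:no:null:intro}; the same applies in $\cCe(X)$ and $\cCexp(X)$. So only existence remains, and it is pure bookkeeping.

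Fix a point $x=(x_1,K_1)$ on $X$ and put $z:=g(x)=(g_{K_1}(x_1),K_1)$, a point on $Z$ (\Cref{defn:x}). First I would record that taking $\cL$-enriched reducts sends $\Hen(x)$ into $\Hen(z)$: each structure in $\Hen(x)$ carries a canonical $\cL(z)$-structure (interpret the constants for $z_1$ by $g$ applied to the constants for $x_1$), which is $\cL(z)$-elementarily equivalent to $K_1$ because a sentence over $\cL(z)$ translates, via the $\cL$-definition of $g$, into a sentence over $\cL(x)$. Consequently the base change $f(z)_{\Hen(x)}$ of $f(z)\in\cC(\{z\})$ to $\Hen(x)$ is well defined --- it is the (mildly more general) base change from just below \Cref{defn:x} --- and the $\Hen(x)$-definable set $\{z\}_{\Hen(x)}$ is exactly the image of $\{x\}$ under $g_{\Hen(x)}$, with $g_{\Hen(x)}|_{\{x\}}\colon\{x\}\to\{z\}_{\Hen(x)}$ an $\Hen(x)$-definable bijection of one-point sets.

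Next I would use three elementary compatibilities, each checkable on Grothendieck-ring generators $[Y]$ (and $[Y,\xi,h]$ for $\cCe,\cCexp$) since pull-back, restriction and base change are ring homomorphisms built levelwise: (i) functoriality of pull-back, $(\alpha\circ\beta)^*=\beta^*\circ\alpha^*$; (ii) transitivity of base change; (iii) base change commutes with pull-back, in particular with restriction. Writing $\iota_x\colon\{x\}\hookrightarrow X_{\Hen(x)}$ for the inclusion, Definition \ref{defn:eval}, then (iii), then (i) give $(g^*f)(x)=\iota_x^*\,(g^*f)_{\Hen(x)}=\iota_x^*\,g_{\Hen(x)}^*(f_{\Hen(x)})=(g_{\Hen(x)}\circ\iota_x)^*(f_{\Hen(x)})$. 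But $g_{\Hen(x)}\circ\iota_x$ factors as $\{x\}\iso\{z\}_{\Hen(x)}\hookrightarrow Z_{\Hen(x)}$, the second map being the base change of the inclusion $\iota_z\colon\{z\}\hookrightarrow Z_{\Hen(z)}$, so by (i) again and by (ii)--(iii), which rewrite $f_{\Hen(x)}|_{\{z\}_{\Hen(x)}}=(f_{\Hen(z)}|_{\{z\}})_{\Hen(x)}=f(z)_{\Hen(x)}$, we obtain that $(g^*f)(x)$ is the transport of $f(g(x))_{\Hen(x)}$ along an $\Hen(x)$-definable bijection of one-point sets, i.e.\ $f(g(x))_{\Hen(x)}$ itself under the canonical identification of $\cC(\{x\})$ with $\cC(\{z\}_{\Hen(x)})$. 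This is \eqref{eq:pullback}. The cases of $\cCe$ and $\cCexp$ are the same computation, run on the generators $[Y,\xi,0]$ of $\cQe$ and $[Y,\xi,h]$ of $\cQexp$, with the $\cP$-factor of $\cC$ handled as in the $\cC$-case.

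I expect the main obstacle to be not any single computation but the verification, at the level of the $\Hen$-definable formalism of Section \ref{sec:def-set}, that (i)--(iii) and the reduct passage $\Hen(x)\to\Hen(z)$ behave exactly as the corresponding constructions do in the subassignment language of \cite{CLoes,CLexp}; in particular, that base change respects all the Grothendieck-ring relations when the underlying collection of structures changes, and that $\{z\}_{\Hen(x)}$ is genuinely cut out inside $Z_{\Hen(x)}$ by the graph of $g$ specialized at $x$. Granting these routine points, the corollary is formal.
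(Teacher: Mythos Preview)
Your proposal is correct and follows essentially the same approach as the paper: uniqueness from Theorem~\ref{thm:no:null:intro}, and verification of \eqref{eq:pullback} from functoriality of pull-back (your compatibility (i)), together with the observation that evaluation at $x$ is itself a pull-back along the inclusion $\{x\}\hookrightarrow X_{\cS(x)}$. The paper's proof is a one-line version of yours; you have unpacked in detail the base-change bookkeeping (your (ii) and (iii), and the passage $\Hen(x)\to\Hen(z)$) that the paper leaves implicit.
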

Note that on the right hand side of \eqref{eq:pullback}, formally, $f(g(x))$ is an element of $\cC(\{g(x)\})$, where $\{g(x)\}$ is considered as an $\cS(g(x))$-definable set, but we can canonically identify $\cS(g(x))(x)$ with $\cS(x)$.

\begin{proof}[Proof of Corollary~\ref{cor:eval:hom:pull}]
The pull-back satisfies (\ref{eq:pullback}),
since the pull-back of a composition is equal to the composition of pull-backs (note that restriction to $\{x\}$ is a pull-back along the inclusion map $\{x\}\to X_{\cS(x)}$). Theorem~\ref{thm:no:null:intro} implies that this already determines $g^*(f)$.
\end{proof}

\bigskip

We fix now what we mean by relative integrals, according to \cite[Sections 14.1 -- 14.3]{CLoes} and \cite[Section 4.3]{CLexp}.  We again restrict our attention to fixed relative dimension, which is the key case for reasons already explained  after \Cref{def:integrab}.
\begin{defnsubsub}[Relative motivic integrals]\label{def:integral}
Consider an $\Hen$-definable function $g:X\to Z$ with fibers of dimension at most $d$ and a function $f$ in $\cC(X)$ which is integrable in relative dimension $d$ over $Z$ along $g$ according to \Cref{def:integrab}. Let $ [f]$ be the class of $f$ in $C^{d}(X\to Z)$, with notation from  \cite[Sections 14.2]{CLoes}. Consider  $g_{!Z}([f])$ in $\cC(Z)$  from  \cite[Sections 14.2]{CLoes} and denote it by  $\mu_{Z,d}(f)$. We call $\mu_{Z,d}(f)$ the relative integral of $f$ in relative dimension $d$ over $Z$ along $g$.
We define $\mu_{Z,d}(f)$ in $\cCe(Z)$, resp.~$\cCexp(Z)$, similarly for $f$ in $\cCe(X)$, resp.~$\cCexp(X)$, using \cite[Section 4.3]{CLexp}.
\end{defnsubsub}
The relative integral $\mu_{Z,d}(f)$ as in \Cref{def:integral} is a form of integration over each fiber of $g$. Note that, in \cite[Sections 14.2]{CLoes}, $g_{!Z}([f])$ is also denoted by $\mu_{Z}([f])$, but we denote it by $\mu_{Z,d}(f)$. 

\begin{cor}\label{cor:eval:push}
Let $X$ and $Z$ be $\Hen$-definable sets, let $f$ be in $\cC(X)$, and let $g:X\to Z$ be an $\Hen$-definable function with fibers of dimension $\leq d$ for some integer $d\geq 0$.
Suppose that for each point $z$ on $Z$, the restriction $f_{|g^{-1}(z)}$ is integrable in dimension $d$, as in \Cref{def:integrab}. Then $f$ is integrable in relative dimension $d$ along $g$, and, with notation from \Cref{def:integral}, $\mu_{Z,d}(f)$ it is the unique function in $\cC(Z)$ satisfying
\begin{equation}\label{eq:pushforward}
\mu_{Z,d}(f)(z) = \mu_{z,d}(f|_{g^{-1}(z)})
\end{equation}
for each point $z$ on $Z$.
The same statement holds when $f$ lies in $\cCe(X)$ or in $\cCexp(X)$.
\end{cor}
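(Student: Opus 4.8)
The plan is to deduce both assertions from the two main theorems together with the base-change behaviour of relative motivic integration.

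First, the integrability claim is immediate: the hypothesis that $f_{|g^{-1}(z)}$ is integrable in dimension $d$ for every point $z$ on $Z$ is exactly condition (2) of Theorem \ref{thm:integrability-intro}, so condition (1) holds, i.e.\ $f$ is integrable in relative dimension $d$ over $Z$ along $g$. Hence $\mu_{Z,d}(f)=g_{!Z}([f])\in\cC(Z)$ is well defined by Definition~\ref{def:integral}. Once the point-wise identity \eqref{eq:pushforward} is established, uniqueness is free: by Theorem \ref{thm:no:null:intro}, an element of $\cC(Z)$ is determined by its values at the points of $Z$, so $\mu_{Z,d}(f)$ is the only function in $\cC(Z)$ satisfying \eqref{eq:pushforward}. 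Thus everything reduces to checking \eqref{eq:pushforward}.

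To prove \eqref{eq:pushforward}, fix a point $z$ on $Z$. I would argue in two steps. (i) The relative integral $g_{!Z}([f])$ of \cite[Section 14]{CLoes} is constructed from the $\Hen$-definable data $(X,Z,g,f)$ by a procedure (cell decomposition in the valued-field direction, then summation over the value group, then integration over the residue field) that only refers to definable sets and functions and their Grothendieck-ring classes; it therefore commutes with the base change $(-)_{\Hen(z)}$, so that $(\mu_{Z,d}(f))_{\Hen(z)}=\mu_{Z_{\Hen(z)},d}(f_{\Hen(z)})$ is the relative integral of $f_{\Hen(z)}$ along $g_{\Hen(z)}$. (ii) Evaluation at $z$ is, by Definition~\ref{defn:eval}, the pull-back along the inclusion $\iota\colon\{z\}\hookrightarrow Z_{\Hen(z)}$, and relative motivic integration is compatible with pull-back along definable maps into the base (the base-change property of $g_{!Z}$ in \cite[Section 14]{CLoes}, i.e.\ the relative analogue of the compatibility used for the projection formulas). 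Base-changing $g_{\Hen(z)}\colon X_{\Hen(z)}\to Z_{\Hen(z)}$ along $\iota$ gives the inclusion $g^{-1}(z)\hookrightarrow X_{\Hen(z)}$ over $\{z\}$, and pulling back $f_{\Hen(z)}$ along it yields $f_{|g^{-1}(z)}$; since $\{z\}$ is fibrewise a singleton, the relative integral in relative dimension $d$ along $g^{-1}(z)\to\{z\}$ is just the absolute integral $\mu_{z,d}(f_{|g^{-1}(z)})$ of Definition~\ref{def:integral}. Composing (i) and (ii) gives \eqref{eq:pushforward}.

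The main obstacle is the base-change compatibility invoked in steps (i) and (ii): one must make precise that the construction of $g_{!Z}$ in \cite{CLoes} genuinely commutes both with the passage from $\Hen$ to $\Hen(z)$ and with pull-back along a definable map into the base -- in other words that relative integration is ``local over the base''. Here one should be mindful of the difference, recorded in Remark~\ref{rem:comparison:fiber}, between our $\Hen(z)$ and the collection $\Hen(z)'$ used to define $i^*_z$ in \cite[Section 5.4]{CLoes}; a convenient route is to first prove the analogue of \eqref{eq:pushforward} with $i^*_z$ in place of evaluation at $z$, using only compatibilities already present in \cite{CLoes}, and then transport it along the base change relating the $\Hen(z)'$-data and the $\Hen(z)$-data. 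For $f$ in $\cCe(X)$ or $\cCexp(X)$ the argument is identical, using \cite[Section 4.3]{CLexp} in place of \cite[Section 14]{CLoes} for $\mu_{Z,d}$ and the $\cCe$- and $\cCexp$-versions of Theorems \ref{thm:no:null:intro} and \ref{thm:integrability-intro}.
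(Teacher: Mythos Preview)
Your proposal is correct and follows essentially the same approach as the paper: integrability from Theorem~\ref{thm:integrability-intro}, uniqueness from Theorem~\ref{thm:no:null:intro}, and \eqref{eq:pushforward} from the compatibility of the relative integral with evaluation at points. The paper's proof is more terse: rather than unpacking steps (i) and (ii), it directly invokes \cite[Equation (4.3.1)]{CLexp} applied to $f_{\cS(z)}$ (together with the inclusions \eqref{eq:incl} for the $\cC$ and $\cCe$ cases), which is precisely the ``convenient route'' you identify at the end.
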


\begin{proof}
The relative integrability in relative dimension $d$ follows from Theorem~\ref{thm:integrability-intro}. That it satisfies (\ref{eq:pushforward}) follows from 
\cite[Equation (4.3.1)]{CLexp} applied to $f_{\cS(z)}$, and the inclusions from (\ref{eq:incl}). Indeed, $\mu_{Z,d}$ from \Cref{def:integral} maps $\cC(X)$ to $\cC(Z)$ and $\cCe(X)$ to $\cCe(Z)$ by (A1) of \cite[Theorems 4.1.1, 4.3.1, Proposition 6.2.1]{CLexp}.
The uniqueness comes from Theorem~\ref{thm:no:null:intro}.
\end{proof}
%
%
\begin{remarksubsub}\label{rem:14.2.2}
Note that Corollary 14.2.2 of \cite{CLoes} looks similar to a part of Corollary \ref{cor:eval:push}, but then for  $\cC_+$ (the non-negative motivic constructible functions) instead of for $\cC$, $\cCe$ and $\cCexp$ and with $i_x^*$ instead of our notion of evaluation at $x$ (see Remark \ref{rem:comparison:eval}).
It seems unlikely that this result of 14.2.2  from \cite{CLoes} could imply directly our results for $\cC$ and $\cCe$. Note that, in any case, our results for $\cCexp$ are completely different from Corollary 14.2.2 of \cite{CLoes}, and, require very different work.
\end{remarksubsub}

\bigskip

Theorem \ref{thm:no:null:intro} implies a slightly more general variant, as follows.
\begin{cor}\label{cor:no:null:main}
Let $g:X\to Z$ be an $\Hen$-definable function for some $\Hen$-definable sets $X$ and $Z$ and let $f$ be in $\cC(X)$.
Then the following statements are equivalent.
\begin{enumerate}
\item\label{item:1:0} One has  $f_{|g^{-1}(z)}=0$ for all points $z$ on $Z$.
\item\label{item:2:0} One has $f=0$.
\end{enumerate}
 Furthermore, the same equivalence holds for $f$ in  $\cCe(X)$ and in $\cCexp(X)$.
\end{cor}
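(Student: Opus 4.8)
The plan is to deduce this corollary directly from Theorem~\ref{thm:no:null:intro} by relating the vanishing of all fiber-restrictions $f_{|g^{-1}(z)}$ to the vanishing of all point-evaluations $f(x)$. The implication \eqref{item:2:0}$\Rightarrow$\eqref{item:1:0} is immediate: if $f=0$ in $\cC(X)$, then since restriction (to $g^{-1}(z)$, after base change to $\cS(z)$) is a ring homomorphism, $f_{|g^{-1}(z)}=0$ for every point $z$ on $Z$; the same holds for $\cCe$ and $\cCexp$. So the content is the converse.

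For \eqref{item:1:0}$\Rightarrow$\eqref{item:2:0}, I would argue by contraposition using Theorem~\ref{thm:no:null:intro}: if $f\neq 0$ in $\cC(X)$, then by that theorem there is a point $x=(x_1,K_1)$ on $X$ with $f(x)\neq 0$ in $\cC(\{x\})$. Set $z := g(x) = (g_{K_1}(x_1),K_1)$, a point on $Z$. The key observation is that $\{x\}$ is an $\cS(x)$-definable subset of $g^{-1}(z)$ (more precisely, of the base change $(g^{-1}(z))_{\cS(x)}$, using the canonical identification of $\cS(z)(x)$ with $\cS(x)$ noted after Corollary~\ref{cor:eval:hom:pull}, since $x_1\in g_{K_1}^{-1}(z_1)$). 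Hence the evaluation $f(x)$ factors as the restriction of $f_{|g^{-1}(z)}$ (base-changed to $\cS(x)$) to the one-point set $\{x\}$: indeed both are obtained from $f_{\cS(x)}$ by restricting along the inclusion $\{x\}\hookrightarrow X_{\cS(x)}$, which factors through $(g^{-1}(z))_{\cS(x)}$. Therefore, if $f_{|g^{-1}(z)}=0$, then $f(x)=0$, contradicting the choice of $x$. The same reasoning applies verbatim with $\cCe$ and $\cCexp$ in place of $\cC$, using the corresponding cases of Theorem~\ref{thm:no:null:intro} and the fact that base change and restriction are ring homomorphisms on all three classes.

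I do not expect any serious obstacle here; the only point requiring a little care is the compatibility of the various base changes and identifications (that $\cS(z)(x)$ really may be identified with $\cS(x)$, and that under this identification restricting $f_{|g^{-1}(z)}$ to $\{x\}$ yields exactly $f(x)$ of Definition~\ref{defn:eval}), which is a straightforward unwinding of the definitions in Section~\ref{sec:def-set} and Section~\ref{sec:eval}. The substance of the statement is entirely contained in Theorem~\ref{thm:no:null:intro}; Corollary~\ref{cor:no:null:main} is a repackaging of it in terms of fibers of a definable map rather than individual points, and the proof is correspondingly short.
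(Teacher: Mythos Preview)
Your proof is correct and is essentially the same as the paper's: both reduce to Theorem~\ref{thm:no:null:intro} by observing that for any point $x$ on $X$ and $z := g(x)$, the evaluation $f(x)$ is obtained by further restricting $f_{|g^{-1}(z)}$, so vanishing on all fibers forces vanishing at all points. The paper phrases this directly rather than by contraposition, but the argument is identical.
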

\begin{proof}
The implication ``(2) $\Rightarrow$ (1)'' is trivial. For the other one, note that given any point $x$ on $X$,
we obtain a point $z = g(x)$ on $Z$, and for this $z$,
$f_{|g^{-1}(z)} = 0$ implies $f(x) = 0$.
Thus we have $f(x) = 0$ for all points $x$ on $X$, so $f = 0$ follows from Theorem \ref{thm:no:null:intro}.
\end{proof}

\bigskip

Theorems~\ref{thm:no:null:intro} and \ref{thm:integrability-intro} together also imply the projection formula of Theorems 1.1 and 2.28 from \cite{CelyRaib}; for simplicity, we only cite the ``non-relative version'' from the introduction of \cite{CelyRaib}, and, in fixed dimension $d$ as in the above definitions \ref{def:integrab}, \ref{def:integral}.

\begin{cor}[Projection formula {\cite[Theorem 1.1]{CelyRaib}}]\label{cor:CelyRaib}
Let $X$, $W_1$, $W_2$ and $\gamma\colon W_1 \to W_2$ be $\Hen$-definable and let $f$ be in $\cC(W_2 \times X)$. Let $X$ be of (valued field) dimension $d$. We write
$\pi_i\colon W_i \times X \to W_i$ for the projection and set $\tilde\gamma := (\gamma\times\operatorname{id}_X)\colon W_1 \times X \to W_2 \times X$ and we use  Definitions \ref{def:integrab}, \ref{def:integral}.
\begin{enumerate}
 \item If $f$ is integrable in relative dimension $d$ over $W_2$ along $\pi_2$ (namely in the fibers of $\pi_2$), then $\tilde\gamma^*(f)$ is integrable in relative dimension $d$ over $W_1$ along $\pi_1$; if $\gamma$ is surjective, then the converse also holds.
 \item If (1) holds, then we have $\gamma^* (\mu_{W_2,d}(f))
= \mu_{W_1,d}(\tilde\gamma^*(f))$.
\end{enumerate}
The same holds for $f$ in $\cCe(W_2 \times X)$ or $\cCexp(W_2 \times X)$ instead of $\cC$.
\end{cor}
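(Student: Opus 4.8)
The plan is to deduce the corollary purely formally from Theorems~\ref{thm:no:null:intro} and~\ref{thm:integrability-intro} and their consequences Corollaries~\ref{cor:eval:hom:pull} and~\ref{cor:eval:push}, reading everything off fiber by fiber. The geometric input is that $\tilde\gamma = \gamma\times\operatorname{id}_X$ restricts, over a point $w_1$ on $W_1$, to the identity on $X$: both $\pi_1^{-1}(w_1)$ and $\pi_2^{-1}(\gamma(w_1))$ are canonically copies of $X$, base changed to $\cS(w_1)$ respectively to $\cS(\gamma(w_1))$, and $\tilde\gamma$ matches them identically. Using that restriction of a motivic function is pull-back along an inclusion and that pull-back is a concrete composition (Corollary~\ref{cor:eval:hom:pull}), together with the identification $\cS(\gamma(w_1))(w_1)\cong\cS(w_1)$ noted after that corollary, one gets that $(\tilde\gamma^*(f))_{|\pi_1^{-1}(w_1)}$ equals, under the evident $\cS(w_1)$-definable bijection $\pi_1^{-1}(w_1)\iso (\pi_2^{-1}(\gamma(w_1)))_{\cS(w_1)}$, the base change to $\cS(w_1)$ of $f_{|\pi_2^{-1}(\gamma(w_1))}$. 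The same bookkeeping applies verbatim with $\cCe$ or $\cCexp$ in place of $\cC$.

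For part~(1), apply Theorem~\ref{thm:integrability-intro} to the projections $\pi_2\colon W_2\times X\to W_2$ and $\pi_1\colon W_1\times X\to W_1$, whose fibers are copies of $X$ and hence have dimension $\le d$. Thus $f$ is integrable in relative dimension $d$ over $W_2$ along $\pi_2$ iff $f_{|\pi_2^{-1}(w_2)}$ is integrable in dimension $d$ for every point $w_2$ on $W_2$, and $\tilde\gamma^*(f)$ is integrable in relative dimension $d$ over $W_1$ along $\pi_1$ iff $(\tilde\gamma^*(f))_{|\pi_1^{-1}(w_1)}$ is integrable in dimension $d$ for every point $w_1$ on $W_1$. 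By the first paragraph, each fiberwise condition for $\tilde\gamma^*(f)$ over $w_1$ matches the fiberwise condition for $f$ over $\gamma(w_1)$ (since integrability in dimension $d$ is stable under $\cS$-definable bijections and under the base changes in play); as every point $w_1$ on $W_1$ produces a point $w_2=\gamma(w_1)$ on $W_2$, the forward implication follows. When $\gamma$ is surjective, every point $w_2 = (z_1, K_1)$ on $W_2$ equals $\gamma(w_1)$ for $w_1 = (w_1', K_1)$ with $w_1'$ any element of $(W_1)_{K_1}$ mapping to $z_1$; then the two families of fiberwise conditions coincide, and the converse follows as well.

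For part~(2), assume the hypothesis of~(1), so that $\mu_{W_2,d}(f)$ exists in $\cC(W_2)$ and, by~(1), $\tilde\gamma^*(f)$ is integrable in relative dimension $d$ over $W_1$, whence $\mu_{W_1,d}(\tilde\gamma^*(f))$ exists in $\cC(W_1)$. By Theorem~\ref{thm:no:null:intro} it suffices to check that these two elements of $\cC(W_1)$ have the same evaluation at every point $w_1$ on $W_1$. On one side, \eqref{eq:pullback} of Corollary~\ref{cor:eval:hom:pull} gives $(\gamma^*(\mu_{W_2,d}(f)))(w_1) = \mu_{W_2,d}(f)(\gamma(w_1))_{\cS(w_1)}$, and \eqref{eq:pushforward} of Corollary~\ref{cor:eval:push} identifies $\mu_{W_2,d}(f)(\gamma(w_1))$ with $\mu_{\gamma(w_1),d}(f_{|\pi_2^{-1}(\gamma(w_1))})$. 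On the other side, \eqref{eq:pushforward} applied to $\tilde\gamma^*(f)$ over $W_1$ (whose fiberwise integrability was established in part~(1)) gives $\mu_{W_1,d}(\tilde\gamma^*(f))(w_1) = \mu_{w_1,d}((\tilde\gamma^*(f))_{|\pi_1^{-1}(w_1)})$, which by the identification from the first paragraph (integrals commuting with $\cS$-definable bijections and with the base change in play) equals the base change to $\cS(w_1)$ of $\mu_{\gamma(w_1),d}(f_{|\pi_2^{-1}(\gamma(w_1))})$. The two evaluations thus agree. Finally, the statements with $\cCe$ or $\cCexp$ in place of $\cC$ follow by the identical argument, since Theorems~\ref{thm:no:null:intro}, \ref{thm:integrability-intro} and Corollaries~\ref{cor:eval:hom:pull}, \ref{cor:eval:push} are all available there.

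The step I expect to be the main obstacle is making the passage through the classes $\cS(w)$ fully rigorous: one must know that ``integrable in dimension $d$'', the relative integral, and the $\cS$-definable bijections between fibers are genuinely insensitive to the base changes from $\cS$ to $\cS(w_1)$ and to the identification $\cS(\gamma(w_1))(w_1)\cong\cS(w_1)$ — in particular, for the converse in~(1), that fiberwise integrability over the larger class $\cS(w_1)$ reflects down to integrability over $\cS(\gamma(w_1))$. This is precisely where the self-contained criteria of Section~\ref{sec:int:revisited} and the elementarity of the class of residue fields from \Cref{defn:S} (which permits a logical-compactness argument on the residue-field side, with the value group $\ZZ$ treated directly) must be invoked carefully, so that Corollaries~\ref{cor:eval:hom:pull} and~\ref{cor:eval:push} can legitimately be applied at points living in arbitrary structures of these enlarged classes.
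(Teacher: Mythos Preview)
Your proposal is correct and follows essentially the same route as the paper: use Theorem~\ref{thm:integrability-intro} to reduce (1) to a fiberwise check over points of $W_1$ (respectively $W_2$), and use Theorem~\ref{thm:no:null:intro} to reduce (2) to the case where $W_1$ and $W_2$ are singletons, where the equality is immediate. Your version is more explicit about the base-change bookkeeping (invoking Corollaries~\ref{cor:eval:hom:pull} and~\ref{cor:eval:push} and the identification $\cS(\gamma(w_1))(w_1)\cong\cS(w_1)$), whereas the paper simply declares the singleton case ``immediate''; the obstacle you flag is real but is indeed absorbed by the criteria of Section~\ref{sec:int:revisited}.
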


\begin{proof}
(1) follows from Theorem~\ref{thm:integrability-intro}; indeed, it allows one to check relative integrability in the fibers of $\pi_i$ on the points of $W_i$.
For (2), note that Theorem~\ref{thm:no:null:intro} allows us to reduce to the case where $W_1$ and $W_2$ are both singletons (we need to prove the equality at each point $w_1$ on $W_1$, so we can replace $W_1$ by $\{w_1\}$ and $W_2$ by $\{\gamma(w_1)\}$), and this case is immediate.
\end{proof}

\section{Integrability revisited}\label{sec:int:revisited}

The following propositions give a direct viewpoint on integrability, in comparison with the (less direct) existence/uniqueness results of integrable motivic constructible (exponential) functions from \cite[Thm. 10.1.1, Prop. 13.2.1, Thm. 14.1.1]{CLoes}, \cite[Theorems 4.1.1, 4.3.1]{CLexp}. Together with
our main theorems, 
they lead to a simplified understanding of the framework of motivic integration of \cite{CLoes,CLexp}.

The proposition  just below is based on the Rectilinearization of Presburger sets in families from \cite{CPres}. It will allow us to formulate a first criterion for integrability in Proposition \ref{prop:crit}. This criterion for integrability is then complemented, for general functions of $\cCexp$-class, by Proposition \ref{lem:exp-to-con}. Furthermore, integrability over valued field variables is reduced to integration over the residue field and the value group, by Propositions \ref{prop:twisted:boxes} and \ref{prop:VF-RF}.

Write $\NN$ for the set of natural numbers (the nonnegative integers). We sometimes consider $\NN$ as the $\Hen$-definable subset $\VG_{\ge 0}$ of $\VG$.



%

\begin{prop}
\label{lem:rec}
Let $X$ be an $\cS$-definable subset of $\VG^{R} \times Z$ for some $R\ge 0$ and some $\cS$-definable set $Z$. Let $f_j$ be in $\cC(X)$ or in $\cCe(X)$, for $j$ in a finite set $J$.
Then, there exist a finite partition of $X$ into $\Hen$-definable sets $X_i$, 
$\Hen$-definable bijections
$$
\theta_i:  \VG_{\ge 0}^{r_i}\times A_i \to X_i
$$
for some $r_i\le R$ and some $\Hen$-definable sets $A_i\subset \VG^{R-r_i}\times Z$ with the following properties for each $i$:
\begin{itemize}
 \item For every point $z$ on $Z$, the fiber
 $A_{i,z}$ is finite.
 \item There exists a matrix $M_i \in \ZZ^{R \times r_i}$ and an $\Hen$-definable function $d_i\colon A_i \to \VG^R$ such that $\theta_i$ is of the form
 \begin{equation}\label{theta:linear}
   (x,y, z) \mapsto (M_ix + d_i(y,z), z),
 \end{equation}
  for $x \in \VG_{\ge 0}^{r_i}$, $y\in\VF^{R-r_i}$ and $z \in Z$ satisfying $(y,z) \in A_i$.
\item
Write  $g$ for the coordinate projection  $\VG_{\ge 0}^{r_i}\times A_i \to \VG^{r_i}$ onto the first $r_i$ coordinates. For each $j \in J$,
there exist a finite set
$$
L_{ij}\subset \NN^{r_i} \times \ZZ^{r_i},
$$
and nonzero $c_{a,b,i,j}$ in $\cC(A_i)$  (resp.~in $\cCe(A_i)$) such that
$\theta_i^*(f_j)$
is of the form
\begin{equation}\label{f:sum:int:prop}
\sum_{(a,b) \in L_{ij}}  c_{a,b,i,j}\cdot  g^a \cdot  \LL^{b\cdot  g}.
\end{equation}
Here, $g^a$ stands for $g_1^{a_1}\cdots g_{r_i}^{a_{r_i}}$ and
$b \cdot g$ stands for $b_1\cdot g_1 + \dots+ b_{r_i} \cdot g_{r_i}$.
\end{itemize}
Furthermore, a sum of the form \eqref{f:sum:int:prop} (with all $c_{a,b,i,j} \ne 0$) is equal to $0$ if and only if $L_{ij} = \emptyset$.
\end{prop}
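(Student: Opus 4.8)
The plan is to produce the partition, the bijections $\theta_i$, and the decomposition \eqref{f:sum:int:prop} in one go, by feeding the whole package of value-group data attached to the $f_j$ into the family rectilinearization of \cite{CPres}, and then to settle the final linear-independence clause separately. I would treat the $\cC$-case and the $\cCe$-case in parallel throughout, since the value group sort is orthogonal to all residue-field data (including the characters that distinguish $\cCe$ from $\cC$). First I would unwind the definition $\cC(X)=\cQ(X)\otimes_{\cP^0(X)}\cP(X)$ to write each $f_j$ as a finite sum $\sum_k [Y_{jk}]\otimes p_{jk}$, with $Y_{jk}\subset X\times\RF^{n_{jk}}$ an $\cS$-definable family of residue-field sets and $p_{jk}\in\cP(X)$. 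By the description of $\cP(X)$, each $p_{jk}$ is an $\AA$-linear combination of products $\prod_s\alpha_s(x)\cdot\LL^{\beta(x)}$ for finitely many $\cS$-definable (hence Presburger) functions $\alpha_s,\beta\colon X\to\VG$; I collect all of these finitely many Presburger functions, over all $j$ and $k$, into a single finite list.

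Next I would invoke Pas orthogonality \cite{Pas} of the value group from the residue field, relative to the base $Z$: any $\cS$-definable family of residue-field sets over $X\subset\VG^R\times Z$ becomes, after a finite partition of the $\VG^R$-directions by Presburger conditions (with parameters in $Z$ and in the residue field), a pullback of an $\cS$-definable residue-field family over the base. Thus, after a first Presburger partition of $X$, each class $[Y_{jk}]$ is constant along the $\VG$-direction within each cell, depending only on the cell and on the $Z$/residue-field parameters. At this stage the structure carried by $X$ is purely Presburger (the cells together with the $\alpha_s,\beta$), decorated by residue-field classes pinned to cells.

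Now I would apply the rectilinearization of Presburger sets in families from \cite{CPres} to this package, simultaneously for all the collected data. This yields a finite partition $X=\bigsqcup_i X_i$ and $\cS$-definable bijections $\theta_i\colon\VG_{\ge0}^{r_i}\times A_i\to X_i$ of the linear shape \eqref{theta:linear}, with $A_i\subset\VG^{R-r_i}\times Z$ having finite fibers $A_{i,z}$ over every point $z$ on $Z$ (the non-rectilinear directions collapse to finitely many values over each $z$), and arranged so that every $\alpha_s\circ\theta_i$ and $\beta\circ\theta_i$ is affine-linear in the rectilinear coordinate $g$, with $\VG$-valued coefficient functions on $A_i$. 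Pulling $f_j$ back, each residue-field class $\theta_i^*[Y_{jk}]$ is $g$-independent, hence lies in $\cQ(A_i)$ (resp.\ $\cQe(A_i)$); each product $\prod_s(\alpha_s\circ\theta_i)$ expands into monomials $g^a$ with $a\in\NN^{r_i}$ and coefficients in $\cP(A_i)$; and each $\LL^{\beta\circ\theta_i}$ becomes $\LL^{b\cdot g}$ times a factor $\LL^{(\cdot)}\in\cP(A_i)$. Collecting terms with the same pair $(a,b)$ gives coefficients $c_{a,b,i,j}\in\cQ(A_i)\otimes_{\cP^0(A_i)}\cP(A_i)=\cC(A_i)$ (resp.\ $\cCe(A_i)$), and discarding the pairs with $c_{a,b,i,j}=0$ yields the finite set $L_{ij}$ and the form \eqref{f:sum:int:prop} with all coefficients nonzero. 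I expect the main obstacle to be exactly this coordination step: making the residue-field classes $g$-independent (orthogonality) while simultaneously linearizing all the $\alpha_s,\beta$ and preserving the rectilinear form, for all finitely many data at once. This is what the family rectilinearization of \cite{CPres} is designed to deliver, and checking that its normal form is compatible with the orthogonality partition is the crux.

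Finally, for the ``Furthermore'' clause I must show that the monomials $g^a\LL^{b\cdot g}$, for distinct $(a,b)\in\NN^{r_i}\times\ZZ^{r_i}$, are linearly independent over $\cC(A_i)$ (resp.\ $\cCe(A_i)$). A single-monomial relation $c\,g^a\LL^{b\cdot g}=0$ with $c\neq0$ is already impossible: evaluating at points $g=n$, using that $\LL^{b\cdot n}$ is a unit in $\AA$ and that the integers $n^a$ have greatest common divisor $1$ as $n$ ranges over $\NN^{r_i}$, forces $c=0$. For a general relation $\sum_{(a,b)\in L}c_{a,b}\,g^a\LL^{b\cdot g}=0$, I would apply suitable iterated difference operators in the coordinates $g_k$ — shifts $g_k\mapsto g_k+1$ combined with subtraction of $\LL^{b_k}$-scaled copies — to successively eliminate all but one monomial, reducing to the single-monomial case just treated. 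This mirrors the degree-lowering manoeuvre used for the value group in the proof of Theorem~\ref{thm:no:null:intro}, the one delicate point being to arrange the elimination so that the surviving coefficient stays nonzero. Hence a vanishing sum \eqref{f:sum:int:prop} with nonzero coefficients forces $L_{ij}=\emptyset$, which in turn makes the decomposition produced above well defined with nonzero coefficients $c_{a,b,i,j}$.
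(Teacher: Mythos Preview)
Your argument for the existence part is essentially the paper's: separate the residue-field content from the $\VG^R$-direction via Pas quantifier elimination (what you call orthogonality), then feed the resulting Presburger data into parametric rectilinearization from \cite{CPres}. The paper phrases the first step more tersely, writing each $f_j$ directly as a sum of terms $c\cdot\prod_\ell h_\ell\cdot\LL^{h_0}$ with $c\in\cC(Z)$ and Presburger $h_\ell$ depending only on the $\VG^R$-coordinates, but the content is the same.

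For the ``furthermore'' clause your route differs. The paper argues structurally: the family $(g^a\LL^{b\cdot g})_{(a,b)}$ is linearly independent over $\cP^0(\VG_{\ge0}^{r_i}\times A_i)$ inside $\cP(\VG_{\ge0}^{r_i}\times A_i)$ (a statement about actual $\AA$-valued functions), and the tensor-product definition of $\cC$ (resp.\ $\cCe$) then forbids any non-trivial relation. Your approach via restriction to slices $\{n\}\times A_i$ and iterated difference operators is more elementary and avoids invoking the $\cP^0$-module structure of $\cP$, at the cost of being more laborious; you correctly flag that keeping the surviving coefficient nonzero under the elimination is the delicate point. Two small remarks: first, in the single-monomial case, restricting to $n=(1,\dots,1)$ already gives $c=0$ directly, so the gcd argument is unnecessary; second, your difference-operator manoeuvre closely parallels Case~2 of the proof of Theorem~\ref{thm:no:null:intro}, which itself \emph{uses} this ``furthermore'' clause --- so take care that your version here is genuinely self-contained and does not appeal to Theorem~\ref{thm:no:null:intro} (it need not, since each restriction $\cC(\VG_{\ge0}^{r_i}\times A_i)\to\cC(\{n\}\times A_i)\cong\cC(A_i)$ is a ring homomorphism for every fixed integer tuple $n$, independently of that theorem).
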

\begin{proof}
The first part (about existence) follows from the parametric rectilinearization result \cite[Theorem 3]{CPres} and the quantifier elimination result from \cite{Pas}, namely as follows: Using quantifier elimination, one obtains that each $f_j$ can be written as a finite sum of expressions of the form
$c \cdot  \prod_{\ell=1}^s h_\ell \cdot  \LL^{h_0}$,
for $c$ in $\cC(Z)$ or $\cCe(Z)$ and where $h_0,\dots, h_s$ are Presburger functions depending only on the first $R$ ($\VG$-)coordinates. Now apply paramatric rectilinearization to the graph of the function sending $x \in \VG^R$ to the tuple of all $h_\ell(x)$, for all $h_\ell$ appearing in all the $f_j$.

The furthermore part follows
by the  definitions of $\cC$ and  $\cCe$ as tensor products in Section \ref{sec:C} and \ref{sec:Cexp}. Indeed,
the family of functions $(g^a\cdot \LL^{b\cdot g})_{(a,b) \in \NN^{r_i} \times\ZZ^{r_i}}$ in $\cP(\VG^{r_i}_{\ge 0}\times A_i)$ is linearly independent over $\cP^0(\VG^{r_i}_{\ge 0}\times A_i)$, so no non-trivial sum of the form
\eqref{f:sum:int:prop} becomes zero in those tensor products.
\end{proof}

We introduce some shorthand notation, similar to \cite[Section 3.1.2]{CLexp}.
For $h:X\to \VF$ an $\cS$-definable function on an $\cS$-definable set $X$, we write $E(h)$ as shorthand in $\cCexp(X)$ for the ``motivic additive character'' evaluated in $h$, which is in full denoted by $[{\rm id}: X\to X,0,h]$ in \ref{sec:Cexp}, with ${\rm id}$ the identity map.

\begin{prop}\label{prop:crit}
Let $X$, $f=f_1$, $R$ and $Z$ be as in Proposition \ref{lem:rec}, with furthermore $Z = \RF^n \times Z'$ for some $\Hen$-definable $Z'$ and $n \ge 0$. Let $h:X\to\VF$ be an $\cS$-definable function, and let $F$  be the function
$F =f\cdot E(h)$ in  $\cCexp(X)$. Then one has that $f=0$ if and only if $F=0$.
Furthermore, the following conditions are equivalent:

\begin{itemize}

\item[(i)] $f$ is integrable in relative dimension $0$ over $Z'$, along the projection $X\to Z'$.
\\

\item[(ii)]  There exist a finite partition of $X$ into pieces $X_i$ as in Proposition \ref{lem:rec} and with its further objects like $\theta_i$, $r_i$ and $L_i=L_{i1}$ and its conditions for $f_1=f$ and $J=\{1\}$, such that moreover
\begin{equation}\label{cond:L}
L_i\subset \NN^{r_i} \times (\ZZ \setminus \NN)^{r_i} \mbox{ for each } i.
\end{equation}
\\

\item[(iii)]  For each way of writing $X$ as finite partition into pieces $X_i$ as in Proposition \ref{lem:rec} and with its further notation and conditions for $f_1=f$ and $J=\{1\}$,
the inclusions as in  (\ref{cond:L}) hold. 
\\

\item[(iv)] $F$ is integrable in relative dimension $0$ along the projection $X\to Z'$.
\\
\end{itemize}
\end{prop}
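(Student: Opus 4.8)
The statement to prove is Proposition~\ref{prop:crit}. My plan is to treat its two assertions separately but in the same spirit, reducing everything to the rectilinear normal form from Proposition~\ref{lem:rec} and then to elementary facts about summability of geometric-type series over $\NN^{r}$.

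\emph{Non-nullity for $F=f\cdot E(h)$.} First I would prove ``$f=0 \iff F=0$''. The implication ``$f=0 \Rightarrow F=0$'' is trivial. For the converse: by the definition of $\cCexp(X)$ as the tensor product $\cQexp(X)\otimes_{\cQ(X)}\cC(X)$, the element $E(h)=[{\rm id}\colon X\to X,0,h]$ is, by relation (3) in the definition of $\cQexp$ combined with the structure over $\cQ(X)$, a unit-like generator that is ``linearly independent over $1$'' in a suitable sense; more carefully, I would invoke that the map $\cCe(X)\to\cCexp(X)$, $a\mapsto a\cdot E(h)$ is injective. The cleanest route is: since $\cCe(X) \hookrightarrow \cCexp(X)$ by \eqref{eq:incl} and $E(h)$ is invertible after pairing (one can multiply by $E(-h)$, using relation (3) to see $E(h)\cdot E(-h)$ is, up to the residue-field exponential part, the identity when $\ord h \geq 0$; in general one first splits $X$ according to $\ord h$ and absorbs the residue-field part into $\cCe$). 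Hence $f\cdot E(h)=0$ in $\cCexp(X)$ forces $f=0$ in $\cCe(X)$, and then in $\cC(X)$ by injectivity of $\cC(X)\to\cCe(X)$. I expect this to be the most delicate bookkeeping point of the whole proposition, because one must carefully handle the case $\ord h$ is not everywhere $\geq 0$; the fix is a finite partition of $X$ according to $\ac(h)$ and the sign of $\ord(h)$, on each piece writing $E(h)$ as $E(h')\cdot(\text{residue-field exponential})$ with $\ord h' \geq 0$, which keeps us inside $\cCe$.

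\emph{Equivalence of (i)--(iv).} I would show (i)$\Leftrightarrow$(ii), (ii)$\Leftrightarrow$(iii), and (i)$\Leftrightarrow$(iv). For (i)$\Leftrightarrow$(ii): by Proposition~\ref{lem:rec} applied to $f$ (with $J=\{1\}$), on each piece $\theta_i^*(f)=\sum_{(a,b)\in L_i} c_{a,b,i}\, g^a\,\LL^{b\cdot g}$ with $c_{a,b,i}\neq 0$ in $\cC(A_i)$ (resp.~$\cCe(A_i)$), and $\theta_i$ is a measure-preserving-up-to-Jacobian bijection of the form \eqref{theta:linear} whose $\VG_{\ge 0}^{r_i}$-fibers over $A_i$ are the relevant ``rectilinear'' pieces. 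Since the $A_{i,z}$ are finite for each point $z$ on $Z$, relative integrability in dimension $0$ over $Z'$ along $X\to Z'$ reduces, fiber by fiber over $Z$ (legitimate here because the relevant notion is the classical summability of the real series obtained by replacing $\LL$ by a real $q>1$, as recalled after Definition~\ref{def:integrab}), to: for each $i$ and each nonzero coefficient, the sum $\sum_{x\in\NN^{r_i}} |c_{a,b,i}(\cdot)|\, x^a\, q^{b\cdot x}$ converges. A monomial $x^a q^{b\cdot x}$ is summable over $\NN^{r_i}$ precisely when every component $b_\ell<0$, i.e.~$b\in(\ZZ\setminus\NN)^{r_i}$; and since the family $(g^a\LL^{b\cdot g})$ is linearly independent over $\cP^0$ (the ``furthermore'' of Proposition~\ref{lem:rec}), there is no cancellation between terms with different $(a,b)$, so summability of the whole sum is equivalent to \eqref{cond:L}. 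This gives (i)$\Rightarrow$(iii)$\Rightarrow$(ii)$\Rightarrow$(i), and (ii)$\Leftrightarrow$(iii) follows since the argument shows the condition \eqref{cond:L} is forced by integrability for \emph{any} such partition, not just one. Finally (i)$\Leftrightarrow$(iv): $F$ and $f$ differ only by the factor $E(h)$, which does not affect the value-group summability analysis — by the first part, $F=f\cdot E(h)=0\iff f=0$, and more precisely the integrability of $F$ in relative dimension $0$ is, via the pushforward formalism of \cite[Section 4.3]{CLexp}, governed by exactly the same $\cC$-class summation data $L_i$ (pushing forward $E(h)$ along finite fibers just rewrites it as another element of $\cCexp(A_i)$ of the same ``$\cC$-support''); alternatively, and more cleanly, I would deduce (iv) directly from (i) by applying the already-proven equivalence with Theorem~\ref{thm:integrability-intro} in mind and the reduction $\cCexp\to\cCe$ of Proposition~\ref{lem:exp-to-con}, but since \ref{lem:exp-to-con} is proven later I would instead argue self-containedly as above. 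The main obstacle, as flagged, is the careful treatment of $E(h)$ when $\ord h$ changes sign; everything else is the standard dictionary between rectilinear normal forms and convergence of multi-indexed geometric series.
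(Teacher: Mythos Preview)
Your treatment of ``$f=0 \iff F=0$'' is needlessly complicated and slightly misdirected. There is no case distinction on $\ord h$: in $\cQexp(X)$ one has $E(h)\cdot E(-h)=[X,0,h]\cdot[X,0,-h]=[X,0,0]=1$ directly from the ring structure, so $E(h)$ is a unit with inverse $E(-h)$, and $f=F\cdot E(-h)$. The same observation handles (i)$\Leftrightarrow$(iv) in one line. Your worry about ``$\ord h$ changing sign'' is a red herring.

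The real gap is in your argument for (i)$\Rightarrow$(iii). You write that relative integrability ``reduces, fiber by fiber over $Z$'' to summability of $\sum_{x\in\NN^{r_i}} |c_{a,b,i}(\cdot)|\, x^a q^{b\cdot x}$. Two problems. First, reducing integrability to a fiber-by-fiber statement is precisely Theorem~\ref{thm:integrability-intro}, which is proved \emph{using} this proposition; you cannot invoke it here. Second, the coefficients $c_{a,b,i}$ live in $\cC(A_i)$ or $\cCe(A_i)$, not in $\RR$; the expression $|c_{a,b,i}(\cdot)|$ has no meaning, and there is no ``substitute $\LL\mapsto q$ everywhere'' map on $\cC(A_i)$ in general. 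The linear independence from the furthermore part of Proposition~\ref{lem:rec} tells you the representation is unique, but not, by itself, that integrability of $f$ forces each $(a,b)$ to lie in $\NN^{r_i}\times(\ZZ\setminus\NN)^{r_i}$.

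The paper's route for (i)$\Rightarrow$(iii) is different and is where the actual content lies. One unwinds the \emph{definition} of relative integrability from \cite{CLoes}: integrability of $f$ over $Z'$ implies integrability over the intermediate base $\VG_{\ge 0}^{r-1}\times A$ (projecting out a single $\VG$-coordinate), and by definition this means $f=\sum_j d_j h_j$ with $d_j$ on the base and $h_j\in\mathrm{I}_Z\cP(X)$ genuinely summable Presburger functions (for which the $\LL\mapsto q$ test is the definition). One then applies Proposition~\ref{lem:rec} \emph{again}, now with $R=1$, to the $h_j$; their summability forces the corresponding one-variable exponent sets into $\NN\times(\ZZ\setminus\NN)$. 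Pulling back the original expansion \eqref{f:sum:int:prop} along the resulting $\theta_i$ and comparing coefficients via the furthermore clause then pins down the first coordinate of $L$; the other coordinates are handled the same way. This two-step rectilinearization and coefficient comparison is the missing idea in your proposal.
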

The inclusion from (\ref{cond:L}) corresponds to summability when one replaces $\LL$ by real $q>1$, where, obviously, $\alpha\mapsto \alpha^aq^{b\alpha}$ is summable over $\alpha\in \NN$ for some $(a,b)\in \NN \times \ZZ$ if and only if $b\in\ZZ\setminus\NN$. Note that taking $0$ as relative dimension for the projection map $X\to Z'$ from \Cref{prop:crit} is natural; indeed, dimensions are taken in the valued field.
\begin{proof}[Proof of Proposition \ref{prop:crit}]
The first statement about the equivalence of $F=0$ and $f=0$ is clear by definition of $\cCexp(X)$ from \cite{CLexp}. (Note that $f=F\cdot E(-h)$, using the inclusions $\cC(X)\subset \cCe(X)\subset \cCexp(X)$.)
The equivalence between (i) and (iv) is similarly clear, as well as the implications from (iii) to (ii) to (i) by the definitions of relative integrability from \cite[Theorems   10.1.1, 14.1.1]{CLoes}, \cite[Theorems 4.1.1, 4.3.1]{CLexp}. (Indeed, replacing $\LL$ by any real $q>1$ yields clearly summable functions when $L_{ij}$ satisfies (\ref{cond:L}).)
Let us thus suppose that (i) holds for $f$ and prove (iii) for $f$,
so let $X_i$ and $\theta_i$ be given.
By working piecewise and pulling back along the maps $\theta_i$, we may suppose that $X$ and $f$ itself are already of the following form:
$$
X = \VG_{\ge0}^{r}\times A
$$
for some $r\le R$ and some $\Hen$-definable set $A\subset \VG^{R-r}\times \RF^n\times Z'$
such that $A_{w}$ is a finite set for every point $w$ on $\RF^n\times Z'$, and, such that
$f$
is of the form
\begin{equation}\label{f:sum:int:prop:again}
f=\sum_{(a,b) \in L}  c_{a,b}\cdot  g^a \cdot  \LL^{b\cdot  g}
\end{equation}
with $L\subset \NN^{r} \times \ZZ^{r}$ a finite set,
nonzero $c_{a,b}$ in $\cC(A)$  (resp.~in
$\cCe(A)$), and where $g$ is the
coordinate projection  $X\to \VG^{r}$.
Our goal is to prove that $L$ is a subset of $\NN^r\times(\ZZ\setminus\NN)^r$.
We will show how to prove $L \subset \NN^r\times (\ZZ\setminus\NN) \times \ZZ^{r-1}$, the other coordinates working similarly.

Relative integrability of $f$ over $Z'$ in relative dimension $0$ in particular implies relative integrability over $Z := \VG_{\ge 0}^{r-1} \times A$, with respect to the projection $\pi_{\ge2}(x_1, \dots, x_r, a) := (x_2, \dots, x_r, a)$.
The definition of relative
integrability in relative dimension $0$ implies that $f$ is equal to a finite sum of
products $d_j \cdot h_j$, for some nonzero $d_j \in \cC(Z)$ (resp.~in $\cCe(Z)$)  and some $h_j \in \mathrm{I}_{Z}\cP(X)$, where $\mathrm{I}_{Z}\cP(X)$ stands for the $\RF^n\times Z$-integrable constructible Presburger functions on $X$ (namely, summable in the fibers of $\pi_{\ge2}$), as defined in \cite[Section~4.5]{CLoes}.

Apply Proposition \ref{lem:rec} to the functions $h_j$ over this set $Z$ (i.e., taking $R = 1$ and considering $X$ as a subset of $\VG^1 \times Z$).
This in particular yields maps  $\theta_i:   \VG_{\ge 0}^{r_i}\times A_i \to X_i
$ (for some $r_i \in \{0,1\}, A_i,X_i$). Since the union of all the $X_i$ is equal to $X$, we have $r_i = 1$ for at least one $i$. Fix such an $i$ for the remainder of the proof. In particular, for that particular $i$, we have $A_i\subset Z$, and $\theta_i$ is of the form
\[
\theta_i\colon
\VG_{\ge0} \times A_i \to \VG_{\ge0} \times Z,
(x, z) \mapsto (e\cdot x + d(z), z)
\]
for some $e \in \ZZ$ and some definable $d\colon A_i \to \VG$. Moreover, one easily sees that $e \ge 1$.

Pulling back \eqref{f:sum:int:prop:again} yields
\begin{equation*}
\theta_i^*(f) = \sum_{(a,b) \in L}  c_{a,b}\cdot\tilde g^a \LL^{b\cdot \tilde g}
\end{equation*}
for $\tilde g := g \circ \theta_i$, which is the map sending $(x_1, \hat x, w) \in \VG \times \VG^{r-1} \times A$ to $(ex_1 + d(\hat x, w), \hat x)$. Regrouping that sum according to the occurence of $x_1$ yields a sum of the form
\begin{equation}\label{gtilde}
\sum_{(a',b') \in L'}  c'_{a',b'}\cdot g_1^{a'} \LL^{b'\cdot g_1}
\end{equation}
for some $L' \subset \NN \times \ZZ$, where $g_1\colon  \VG_{\ge 0}^{r_i}\times A_i \to \VG$ is the projection to the first coordinate, and where $c'_{a',b'}$ live on $A_i$. Moreover,
to obtain our desired conclusion that $L$ is a subset of $\NN^r\times (\ZZ\setminus\NN) \times \ZZ^{r-1}$, it now suffices to prove that $L'$ is a subset of $\NN \times (\ZZ\setminus\NN)$ (since $e \ge 1$).

Our application of Proposition \ref{lem:rec} to the $h_j$ also yields sets $L_{ij} \subset \NN\times \ZZ$ over which the sums \eqref{f:sum:int:prop} run (still for the specificly chosen $i$ satisfying $r_i = 1$).
Since $h_j$ is summable over $Z$, so is
$\theta_i^*(h_j)$, so we have
$L_{ij} \subset \NN\times(\ZZ\setminus\NN)$ for all $j$.
This yields a similar statement for the pullback of $f$:
\[
\theta_i^*(f) = \sum_j \theta_i^*(d_j)\theta_i^*(h_j)
=\sum_j d_j\sum_{(a',b') \in L_{ij}} c_{a',b',i,j}\cdot  g_1^{a'} \cdot  \LL^{b'\cdot  g_1}
,
\]
which we can rewrite as
\[
\dots =\sum_{(a',b') \in L_{i}} c''_{a',b'}\cdot  g_1^{a'} \cdot  \LL^{b'\cdot  g_1}
\]
for some $c''_{a',b'}$ in $\cC(A_i)$ (resp.~in $\cCe(A_i)$), and for $L_i \subset \bigcup_j L_{ij} \subset  \NN\times(\ZZ\setminus\NN)$.
Comparing the coefficients of this with those of \eqref{gtilde} (which we can, by the furthermore part of Proposition~\ref{lem:rec}), we obtain $L' \subset \NN \times (\ZZ\setminus\NN)$, as desired.
\end{proof}

In the following proposition, $E(h)$ is shorthand notation as recalled just above \Cref{prop:crit}.

\begin{prop}[Reduction from $\cCexp$ to  $\cCe$]\label{lem:exp-to-con}
Let $f$ be in $\cCexp(X)$ for some $\Hen$-definable set $X$. Then there exist an $\Hen$-definable set $\widetilde X\subset X\times \RF^r$ for some $r\geq 0$
and $\tilde f$ in $\cCexp(\widetilde X)$ such that the projection
$p:\widetilde X \to X$ is surjective with finite fibers and such that 
$$
\tilde f = E(h)c
$$
for some $\Hen$-definable function $h:\widetilde X \to \VF$ and some $c$ in $\cCe(\widetilde X)$, 
and such that furthermore
$$
\mu_{X,0}(\tilde f)=f
$$
and, for all points $a,b$ on $\widetilde X$ with $p(a)=p(b)$,
\begin{equation}\label{eq:rel}
\ord (h(a) - h(b) ) <0,
\end{equation}
where $\mu_{X,0}(\tilde f)$ is the relative integral of $\tilde f$ in relative dimension $0$ along $p$ from \Cref{def:integral}.    

Moreover, for any such choice of $\widetilde X$  and $\tilde f$,
for each point $x$ on $X$, each $d\ge 0$, and each  $\cS$-definable function  $g:X\to Z$ whose fibers have dimension at most $d$, the following statements hold.
\begin{itemize}
\item[(a)]\label{item:1} One has 
$$
f=0 \mbox{ if and only if } \tilde f =0.
$$ 

\item[(b)]\label{item:2} One has 
$$
f(x)=0 \mbox{ if and only if } \tilde f|_{p^{-1}(x)}=0.
$$ 

\item[(c)]\label{item:3} 
The function $f$ is integrable in relative dimension $d$ over $Z$ along $g$ if and only if $\tilde f$ is integrable in relative dimension $d$ over $Z$ along $g\circ p$.
 \end{itemize}
\end{prop}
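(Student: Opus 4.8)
The plan is to produce $\widetilde X$ and $\tilde f$ by a concrete construction and then verify the three numbered statements by combining the defining relations of $\cCexp$ with the criterion of Proposition \ref{prop:crit}. First I would recall from \cite{CLexp} that every $f$ in $\cCexp(X)$ is a finite sum of terms $[Y,\xi,h]$, each of which (after pulling back to the auxiliary $\RF$-variables of the $Y$'s and using a common partition) can be put on a single piece $Y\subset X\times\RF^r$, where $p:Y\to X$ has finite fibers; using the relation (3) in the definition of $\cQexp$ one can absorb the ``small part'' of $h$ into $\xi$, so that after a further finite refinement of $\widetilde X$ we may assume $\tilde f = E(h)\,c$ with $c$ in $\cCe(\widetilde X)$ and $h:\widetilde X\to\VF$ an $\cS$-definable function. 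The identity $\mu_{X,0}(\tilde f)=f$ is then exactly the statement that integrating out the residue-field fibers recovers $f$; this is built into the way $\cCexp$ is constructed (the class $[Y,\xi,h]$ \emph{is} the pushforward in relative dimension $0$ of the corresponding function on $Y$), so it requires only bookkeeping with \cite[Section 4.3]{CLexp}. To arrange the separation condition \eqref{eq:rel} one further subdivides: enlarge $\widetilde X$ by the finitely many differences $\ac(h(a)-h(b))$ and the sign of $\ord(h(a)-h(b))$ for pairs in the same fiber of $p$, and discard (via relation (4) of $\cQexp$, which kills contributions that are constant along an $\RF$-direction after a non-cancelling additive character) the part where two values of $h$ in a fiber are congruent modulo the maximal ideal; what survives satisfies $\ord(h(a)-h(b))<0$.

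For the ``moreover'' part, statement (a) is immediate from the first equivalence proved in Proposition \ref{prop:crit}: since $\tilde f = E(h)c$ with $c$ in $\cCe$, one has $\tilde f=0$ iff $c=0$; and $f=\mu_{X,0}(\tilde f)$ together with the fact that $\mu_{X,0}$ applied to $E(h)c$ records the ``sum over the fiber'' of the $E(h)$-twisted pieces — combined with the orthogonality relation \eqref{eq:rel}, which prevents cancellation between distinct fiber points — forces $f=0$ iff $c=0$. Concretely, \eqref{eq:rel} guarantees that the character values $E(h(a))$ for distinct $a$ in a fiber of $p$ are ``linearly independent'' in the relevant Grothendieck-ring sense (this is the mechanism behind relation (4) of $\cQexp$), so no term of $\tilde f$ can be annihilated in the pushforward unless it was already zero. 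Statement (b) is the localization of (a): evaluating at $x$ commutes with $\mu_{X,0}$ along the finite map $p$ because pushforward in relative dimension $0$ along a finite-fibered map is compatible with base change (here base change to $\cS(x)$, using Corollaries \ref{cor:eval:hom:pull} and \ref{cor:eval:push} or directly the projection formula \cite[Theorem 4.1.1]{CLexp}), so $f(x)$ is the $\mu$-pushforward of $\tilde f|_{p^{-1}(x)}$, and applying the already-established equivalence over the base $\cS(x)$ gives the claim. Statement (c) follows the same pattern: by the projection formula for relative integrals in \cite[Theorem 4.1.1, 4.3.1]{CLexp} (or Corollary \ref{cor:CelyRaib}), integrability of $f$ in relative dimension $d$ over $Z$ along $g$ is equivalent to integrability of $\mu$-pushforward-compatible data upstairs, and since $p$ has finite fibers and $g\circ p$ factors through $g$, ``integrable along $g$'' upstairs for $\tilde f$ matches ``integrable along $g$'' downstairs for $f$; the non-cancellation from \eqref{eq:rel} again ensures that no integrability is gained or lost in passing through $p$.

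The main obstacle I anticipate is the construction step, specifically arranging simultaneously that $\tilde f = E(h)c$ with $c\in\cCe$ \emph{and} that \eqref{eq:rel} holds while still having $\mu_{X,0}(\tilde f)=f$ exactly: each refinement of $\widetilde X$ changes the fibers of $p$ and one must check that relations (3) and (4) of $\cQexp$ are being applied legitimately (e.g. that the ``small part'' of $h$ one moves into $\xi$ genuinely has nonnegative order on the relevant piece, which may require a prior partition according to $\ord(h(a)-h(b))$) and that discarded pieces really are zero in $\cCexp$, not merely zero after integration. Once this normal form is in hand, the rest is a routine transport of the Proposition \ref{prop:crit} criterion and the projection formula along the finite map $p$, together with base change for the point-evaluation in (b). I would organize the write-up as: (1) construction of $\widetilde X,\tilde f,h,c$ with the stated properties; (2) the non-cancellation lemma encoding \eqref{eq:rel} (``$\mu_{X,0}(E(h)c)=0 \iff c=0$''); (3) deduction of (a); (4) base change and deduction of (b); (5) projection formula and deduction of (c).
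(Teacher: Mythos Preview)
Your plan for statements (a), (b), (c) captures the right mechanism --- the separation condition \eqref{eq:rel} prevents any relation in $\cCexp$ between the character values $E(h(a))$ for distinct points in a fiber, so nothing is lost under $\mu_{X,0}$ --- but your route is more elaborate than needed and in one place circular. You invoke Corollary~\ref{cor:CelyRaib} (and Corollaries~\ref{cor:eval:hom:pull}, \ref{cor:eval:push}) for (b) and (c); in this paper those corollaries are deduced from Theorems~\ref{thm:no:null:intro} and~\ref{thm:integrability-intro}, whose proofs for $\cCexp$ rely on Proposition~\ref{lem:exp-to-con}(b),(c). The paper instead argues directly: one direction of (a) and (c) is immediate from $\mu_{X,0}(\tilde f)=f$, and the other is read off from the definition of $\cCexp$ in \cite[Section~3]{CLexp}, which imposes no relations among $E(h(a))$ when the values $h(a)$ are pairwise far apart. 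Statement (b) is then literally (a) applied over $\cS(x)$ rather than $\cS$; no projection formula is needed.

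The genuine gap is in the construction. Your assertion that after a common partition the generators sit on a single $Y\subset X\times\RF^r$ with $p:Y\to X$ \emph{finite} is not justified: the $Y_i$ in a presentation of $f$ may have arbitrarily large residue-field fibers over $X$. The finite-fibered $\widetilde X$ does not come from the $Y_i$ themselves. The paper's key observation is that although the fibers $p_i^{-1}(x)$ are large, the images $h_i(p_i^{-1}(x))\subset\VF$ are \emph{finite} (definable maps $\RF^n\to\VF$ have finite image, by quantifier elimination). One then takes the union $G_x$ of these images, partitions $G_x$ into maximal classes $C$ of pairwise close elements (i.e., $\ord(a-a')\ge 0$), and lets $\widetilde X$ parametrize the averages $a_C$. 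Over each such $a_C$, relation (3) of $\cQexp$ lets one replace $h_i$ by the constant $a_C$ at the cost of adding $\overline{h_i-a_C}$ to $\xi_i$; this is where the $\cCe$-part $c$ absorbs the original $Y_i$ (which may still have large fibers over $\widetilde X$). Your idea of ``discarding via relation (4)'' the locus where two $h$-values are close is not what happens: nothing is discarded, and relation (4) plays no role here --- close values are \emph{merged} into a single representative $a_C$ via relation (3). Once you have this construction, the separation \eqref{eq:rel} is automatic (distinct classes $C$ have representatives at distance $<0$), and $\mu_{X,0}(\tilde f)=f$ is just the observation that pushing the finite sum over the classes $C$ back down recovers the original presentation.
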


Note that $\mu_{X,0}(\tilde f)$ from \Cref{lem:exp-to-con} coincides with the push-forward $p_!(\tilde f)$ of $\tilde f$ along $p$ as in Section 3.6 of \cite{CLexp}.

\begin{proof}[Proof of Proposition~\ref{lem:exp-to-con}]
By the definition of $\cCexp(X)$ in \Cref{sec:Cexp}, 
$f$ is a finite sum of terms of the form
$$
H_i\otimes [p_i: Y_i\to X,\xi_i,h_i]
$$
with $H_i$ in $\cC(X)$, $Y_i\subset X\times \RF^{r_i}$ an  $\Hen$-definable set for some $r_i$, and $\Hen$-definable functions $\xi_i:Y_i\to \RF$,  $h_i:Y_i\to \VF$, 
and projection $p_i:Y_i\to X$.
Consider a point $x$ on $X$ and let $G_x:= \bigcup_i h_i(p_i^{-1}(x))$ be the union over $i$ of the images of the restrictions of $h_i$ to $p_i^{-1}(x)$ (strictly speaking, the restriction of $h_{i,\Hen(x)}$ to $p_i^{-1}(x)$).
Then $G_x$ is a finite $\Hen(x)$-definable set (indeed, definable functions from the residue field into the valued field have finite image, by quantifier elimination \cite{Pas}).
We will now construct the $\Hen$-definable set $\widetilde X$, as follows. 
For each maximal subset $C$ of $G_x$ with the property that $\ord ( a_j - a_{j'})\ge 0$ for all  $a_j,a_{j'}$ in $C$, let  $a_C$  be the average of the elements of $C$, and let $G'_x$ consist of the so-obtained elements $a_C$. Clearly, this condition depends definably on $x$, i.e., there exists an $\Hen$-definable set
$\widetilde G\subset X\times \VF$ such that $\widetilde G_x $ equals $G'_x$ for each point $x$ on $X$. Now consider an $\Hen$-definable injection
$\iota : \widetilde G \to X\times \RF^r$ for some $r\geq 0$ such that $\iota$ makes a commutative diagram with the projections to $X$ (such $\iota$ exists by cell decomposition \cite{Pas}, or, more simply, by a direct argument involving uniform finiteness and taking differences with  averages of close-by points). Let $\widetilde X$ be $\iota(\widetilde G)$ and write $p:\widetilde X \to X$ for the projection. We will now construct $\tilde f$ on $\widetilde X$ as desired. For each $i$, let $\widetilde p_i:Y_i\to \widetilde X$ be the $\cS$-definable function sending $y$ in $Y_i$ with $p_i(y)=x$ to $\iota(x,a_C)$, where $a_C$ is such that $h_i(y)$ belongs to the maximal subset $C$ of $G_x$ with the above property that $\ord ( a_j - a_{j'})\ge 0$ for all  $a_j,a_{j'}$ in $C$. For each $i$, let $\widetilde \xi_i :Y_i\to \RF$ be the $\cS$-definable function sending $y$ in $Y_i$ to $\xi_i(y) +  \ac (h_i(y)   -  a_C )$, where $\widetilde p_i(y) = \iota(x,a_c)$. Finally, let $h:\widetilde X\to\VF$ be the $\cS$-definable function sending $\iota(x,a_C)$ to $a_C$.
Now define $\tilde f$ as $E(h)c$, with $c= \sum_i p^*(H_i)\otimes  [\widetilde p_i:Y_i\to \widetilde X,\widetilde \xi_i,0]$.
This finishes the proof of the first part of the Proposition.

In statements (a) and (c), the implication ``$\Leftarrow$'' follows directly from $p_!(\tilde f) = f$.
The other direction
follows from the  inequalities from (\ref{eq:rel}) and by the definition of $\cCexp$ in Section 3 of \cite{CLexp} (which in particular has no relations in $\cCexp$ relating the motivic additive character evaluated in values which are far apart, that is, having differences of negative valuation). Note that (b) is a special case of (a), with $\cS(x)$ instead of $\cS$, and, with $\{x\}$ instead of $X$. This finishes the proof of the Proposition.
\end{proof}

The following two propositions give a criterion for integrability for integrals over valued fields variables by reducing to the situation of Proposition \ref{prop:crit}, of value group and residue field variables.
\begin{defn}[Boxes]\label{defn:box}
By a box $B$ in $\VF_K^n$ with $K$ in $\cS$ and $n\ge 0$ we mean a Cartesian product of $n$ balls  $\{x_i\mid \ord( x_i - a_i ) \ge  m_i\}$ for some $a_i\in \VF_K$ and some $m_i$ in $\VG_K$ for $i=1,\ldots,n$, with $B=\{0\}$ in case that $n=0$. Such a box $B$ has (valued field) dimension $n$, and, we write $\rad_n(B)$ for  $m_1+\dots+m_n$  (the sum of the valuative radii). %
\end{defn}
We follow \cite[Definition 7.9]{BGlockN} for the definition of strict $C^1$ functions. This notion
is a useful analogue for discrete valued fields of the notion of real $C^1$ functions. Indeed, see \cite{Glock,Bertram,BGlockN,Glock2006} for inverse and implicit function theorems for strict $C^1$  maps, and related results.\footnote{Note that in Definition 2.1.1 of \cite{CHLR}, which recalls the very same definition from \cite[Definition 7.9]{BGlockN}, the norm is missing in the numerator of the difference quotient.}

\begin{defn}[Strict $C^1$ functions]\label{defn:strict C1}
	A function $f:U\subset K^n\to K^m$ with $U$ open in $K^n$ and $K$ in $\cS$ is called strict $C^1$ 
at $a\in U$ if there is a matrix $A$ in $K^{m\times n}$ such that
		$$\lim_{(x,y)\to(a,a)} \ord (f(x) - f(y) - A\cdot(x-y)) - \ord (x-y) =+\infty $$
	where the limit is taken over $(x,y)\in U^2$ with $x\not=y$. Such $A$ is automatically unique and is denoted by $f'(a)$ or by $Df(a)$. 	The function $f$ is called strict $C^1$ if it is strict $C^1$  at each $a\in U$.
\end{defn}


\begin{prop}[Strict $C^1$ Parameterization on boxes]\label{prop:twisted:boxes}
Let $f_j$ be in $\cC(X)$, resp.~in $\cCe(X)$, for some $\cS$-definable sets $Z$ and  $X\subset \VF^n\times Z$ and for $j$ running over a finite set $J$. Then there exist $\cS$-definable subsets
$$
Y_i\subset \VF^i\times \VG^i\times \RF^m\times Z
$$
for $i=0,\ldots,n$ and some $m\ge 0$, and $\cS$-definable injections
$$
\varphi_i: Y_i\to X
$$
over $Z$ such that for each $i$, each $j$, and for each point $z$ on $Z_i:= p_i(Y_i)$ with $p_i:Y_i\to \VG^i\times \RF^m\times Z$ the projection map, the following hold.


\begin{itemize}
 \item[(a)]
The sets $X_i := \varphi_i(Y_i)$ are disjoint and their union equals $X$.

\item[(b)]  The fiber $Y_{i,z}= p_i^{-1}(z)$ is a box in $\VF^i$, and hence, is of (valued field) dimension $i$. 

\item[(c)] For each point $z$ on $Z_i$, the map $\varphi_{i,z}$ (being $\varphi_i$ restricted to $Y_{i,z}$) is a strict $C^1$ map which is moreover an isometry onto its image. Furthermore, the Jacobian of $\varphi_n$ is constant equal to $1$.

\item[(d)] There are $F_{ij}$ in $\cC(Z_i)$ (resp.~in $\cCe(Z_i)$) such that $\varphi_i^*(f_j)$ equals $p_i^*(F_{ij})$.
In particular, the restriction of  $\varphi_i^*(f_j)$ to $Y_{i,z}$ is constant.
\end{itemize}
\end{prop}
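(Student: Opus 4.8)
The plan is to prove Proposition~\ref{prop:twisted:boxes} by induction on the number $n$ of valued field variables, peeling off one valued field coordinate at a time by cell decomposition in the Denef--Pas language \cite{Pas}, while keeping track of the functions $f_j$ via quantifier elimination. First I would apply quantifier elimination \cite{Pas} to write each $f_j$ in a normal form in which the valued field variables enter only through their orders and angular components (together with residue field and value group parameters). Then I would apply cell decomposition to the set $X\subset\VF^n\times Z$ \emph{together with} all the data needed to describe the $f_j$, so that on each cell the last valued field coordinate $x_n$ ranges over a ball (or a point) whose center $c$ and valuative radius $m$ depend definably on the remaining coordinates, and such that $\ord(x_n-c)$ and $\ac(x_n-c)$ are the only way $x_n$ influences the $f_j$.

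Next I would handle the two cases of the cell. If $x_n$ lies in a single point $x_n=c(\hat x,z')$, this coordinate is redundant and we drop it, reducing to $n-1$ valued field variables on that cell. If $x_n$ ranges over a genuine ball of radius $m$ around $c$, I would introduce a new value group variable for the order $\ord(x_n-c)$ and a new residue field variable for $\ac(x_n-c)$; this is where the extra $\VG$-coordinate and $\RF$-coordinate of $Y_i$ come from, and the subtlety is to arrange the change of variables $x_n=c+t^{\ord}\cdot u$ (or rather an $\cS$-definable lift of $u\in\RF$ to $\VF$ together with the tail) so that the resulting map onto the ball is an isometry with Jacobian exactly $1$; here one uses that $t$ has $\ord(t)=1$ and $\ac(t)=1$, and that after fixing the order and angular component the remaining freedom in $x_n$ is a genuine ball in $\VF$ to which the same construction can be iterated. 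After this, the $f_j$ restricted to such a cell no longer depend on $x_n$ at all (only on the new $\RF$ and $\VG$ variables and the old parameters), which is exactly what property (d) asks for, and one recurses on the remaining $n-1$ valued field coordinates living over the enlarged parameter space.

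The remaining points are bookkeeping: property (a) (disjointness and covering) is automatic from cell decomposition being a partition; property (b) is built into the construction since the fiber over a point $z$ of the parameter space is, by construction, a product of balls, i.e.\ a box; and the isometry and strict $C^1$ statements in (c) follow because each elementary step is (an $\cS$-definable family of) affine maps $x_n\mapsto c+t^k(u_0+\text{higher order})$, which are isometries with locally constant — hence, after the final normalization, constant equal to $1$ — Jacobian, and strict $C^1$ maps in the sense of Definition~\ref{defn:strict C1}, a property stable under composition and under such twisted-box parameterizations; compose the maps over all $n$ steps to get $\varphi_i$. Finally, after all $n$ steps one has replaced each valued field coordinate either by nothing (point case) or by a $\VG$-plus-$\RF$ pair, so the source of $\varphi_i$ is of the form $\VF^i\times\VG^i\times\RF^m\times Z$ for the appropriate $i\le n$ and a common $m$, obtained by padding.

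I expect the main obstacle to be the Jacobian normalization in (c): cell decomposition naturally gives a ball whose radius varies definably with the parameters, and the twisted coordinate change $x_n=c+t^{m}\,\tilde u$ introduces a Jacobian factor $t^{m}=\LL^{-m}$ which is locally constant but not $1$; one must absorb this varying power of $\LL$ by re-parameterizing so that it becomes part of the parameter-dependent data (it is constant on each fiber $Y_{i,z}$, which suffices for all properties except the last sentence of (c)), and then separately argue that the \emph{top-dimensional} map $\varphi_n$ can be chosen with Jacobian identically $1$ — this is possible precisely when no valued-field coordinate is dropped, so that the radii can be read off and compensated. Controlling this while keeping everything $\cS$-definable and uniform over the (non-elementary) class $\cS$, and simultaneously ensuring (d) holds for all the finitely many $f_j$ at once, is the delicate part; the rest is a routine, if lengthy, induction.
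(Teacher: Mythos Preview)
Your overall strategy---iterated Pas cell decomposition, introducing $\VG$- and $\RF$-coordinates for $\ord(x_n-c)$ and $\ac(x_n-c)$ at each step---matches the paper's (which invokes \cite[Theorems~7.2.1, 7.5.3]{CLoes}). The divergence is in how you parameterize the ball in the last coordinate, and there your proposal takes a wrong turn. You use a rescaling $x_n=c+t^{m}\tilde u$, then identify the resulting Jacobian factor $t^m$ as the ``main obstacle'' and propose to ``compensate'' for it on the top-dimensional piece. This obstacle is self-inflicted. The paper uses a pure \emph{translation}: the $\VF$-coordinate of $Y$ is simply $v_n:=x_n-c$, so that the one-step map is $v_n\mapsto v_n+c(\hat x,z)$, and after iteration $\varphi_n$ is a triangular translation of the shape $(\hat v,v_n)\mapsto(\hat v,\,v_n+c(\hat v))$. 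Its Jacobian matrix in the $\VF$-variables is unipotent upper-triangular, hence has determinant identically $1$ with nothing to compensate. (Your parenthetical ``lift together with the tail'' is essentially this picture, but you then revert to the rescaled one.) Note also that your rescaled map fails the \emph{isometry} clause of (c) as well---multiplication by $t^m$ scales distances by $|t|^m$---so the problem is not confined to the last sentence of (c) as you claim.

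Two smaller points the paper handles and your outline does not. First, even for the triangular translation, the composed map is an isometry on a box only because the centers $c$ vary slowly enough on that box; this is exactly what the refined cell decomposition of \cite[Theorem~7.5.3]{CLoes} supplies, and you should invoke it rather than appeal to ``affine maps''. Second, for $f_j\in\cCe(X)$ in (d), the paper adds a short extra step: write $f_j=\sum_\ell H_\ell\otimes[Y_\ell\to X,\xi_\ell,0]$ with $H_\ell\in\cC(X)$ and run the preparation of \cite[Theorem~7.2.1]{CLoes} with the graphs of the $\xi_\ell$ in place of the sets $Y_\ell$; your proposal treats $\cC$ and $\cCe$ identically.
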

Recall that $\varphi_i$ being over $Z$ means that $\varphi_i$ makes a commutative diagram with the projections to $Z$. An isometry is for the supremum norm, as usual on non-archimedean fields,  with the valuation of a tuple being the minimum of the valuations of the tuple entries. 
\begin{proof}[Proof of Proposition \ref{prop:twisted:boxes}]
The proposition follows from an iterated application of the cell decomposition theorem of Pas \cite{Pas}, in the form of \cite[Theorems 7.2.1, 7.5.3]{CLoes}. (In Section 7.5 of \cite{CLoes} it is shown that the maps $\varphi_{i,z}$ can even be taken analytic.)
To see that only one part $Y_i$ is needed in each dimension $i=0,\ldots,n$, one notes that the corresponding disjoint unions can be easily realized by increasing $r$ if necessary. The isometric nature of the $\varphi_{i,z}$ follows by iteration in the $n$ variables of the variant of \cite[Theorem 3.2]{Pas} given by Theorem 7.5.3 of \cite{CLoes}. That the Jacobian of $\varphi_n$ can be taken to be identically equal to $1$ follows from  the Jacobian matrix being upper triangular with $1$ on the diagonal (indeed, the iterated cell decomposition gives  $\varphi_n$ as a triangular translation, like $(x,y)\mapsto (x,y+c(x))$ with $c$ being strict $C^1$).  The $F_{ij}$ are given by the (iterated) variant of Pas cell decomposition given by Theorem 7.2.1 of \cite{CLoes} in the case that the $f_j$ lie in $\cC(X)$. In the more general case that the $f_j$ lie in $\cCe(X)$, one writes $f$ as a finite sum of terms of the form $H_\ell\otimes [Y_\ell\to X,\xi_\ell,0]$ with $H_\ell$ in $\cC(X)$, and then the $F_{ij}$ are found as in the proof of Theorem 7.2.1 of \cite{CLoes} with the graphs of the $\xi_\ell$ instead of the sets $Y_\ell$ (that is, with the $a_i$ in  the proof of Theorem 7.2.1 of \cite{CLoes} being the classes of the graph of the $\xi_\ell$).
\end{proof}

\begin{prop}[Reduction of integrability to $\RF$ and $\VG$]\label{prop:VF-RF}
Let $f_j$, $X$, $Z$, $J$ be as in Proposition \ref{prop:twisted:boxes}, and choose the data $X_i$, $\varphi_i$, $Y_i$, $F_{ij}$, etc., as given by that proposition. Let  $d_j$ be integers with $0\le d_j\le n$ for  $j\in J$. For each $i$ and $j$, consider the $\cS$-definable function $g_i$ on $Z_i$ sending a point $z$ on $Z_i$ to $\rad_{i} (p_i^{-1}(z)))$ as in Definition \ref{defn:box}.
Then the following are equivalent.
\begin{itemize}
 \item[(i)] $f_j$ is integrable in relative dimension $d_j$ over $Z$ along the projection $X\to Z$.

\item[(ii)] 
One has
$F_{ij}=0$ when $i>d_j$, and, for $i=d_j$, the function $F_{ij} \LL^{-g_i}$ is integrable in relative dimension $0$ over $Z$ along the projection to $Z_i\to Z$.
 \end{itemize}
\end{prop}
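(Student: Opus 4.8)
The plan is to split $f_j=\sum_i(f_j)|_{X_i}$ along the partition $X=\bigsqcup_iX_i$, to transport each piece along $\varphi_i$ to the function $p_i^*(F_{ij})$ on $Y_i$ (using $\varphi_i^*(f_j)=p_i^*(F_{ij})$ from \Cref{prop:twisted:boxes}(d)), and then to integrate first over the $\VF^i$-box fibers and afterwards over the remaining $\VG^i\times\RF^m$-directions by Fubini. Since $p_i^*(F_{ij})$ is constant equal to $F_{ij}(z)$ on each box fiber $Y_{i,z}$, and $\varphi_{i,z}$ is injective on the $i$-dimensional box $Y_{i,z}$ by \Cref{prop:twisted:boxes}(b)--(c), one gets $\Supp\big((f_j)|_{X_i}\big)=\varphi_i\big(p_i^{-1}(\Supp F_{ij})\big)$, whose fiber over a point of $Z$ is a disjoint union of the $i$-dimensional sets $\varphi_{i,z}(Y_{i,z})$ over the points $z$ of $\Supp F_{ij}$ lying above it. Hence $\Supp\big((f_j)|_{X_i}\big)$ has relative dimension $i$ over $Z$ unless $\Supp F_{ij}=\emptyset$, i.e.\ unless $F_{ij}=0$; here, and again below, we use \Cref{thm:no:null:intro} to pass between vanishing pointwise and vanishing in $\cC$, resp.~$\cCe$.

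The first step is to observe that, by the ``more general'' notion of integrability recalled after \Cref{def:integrab}, condition (i) requires in particular that $\Supp f_j=\bigcup_i\Supp\big((f_j)|_{X_i}\big)$ have relative dimension $\le d_j$ over $Z$, which by the previous paragraph means $F_{ij}=0$ for all $i>d_j$; this is the first half of (ii). Granting this, $f_j=\sum_{i\le d_j}(f_j)|_{X_i}$, and passing to $C^{d_j}(X\to Z)$ the pieces with $i<d_j$ die, being supported in relative dimension $<d_j$, so the class of $f_j$ there equals that of $(f_j)|_{X_{d_j}}$. Thus (i) is equivalent to the conjunction of ``$F_{ij}=0$ for all $i>d_j$'' and ``$(f_j)|_{X_{d_j}}$ is integrable in relative dimension $d_j$ over $Z$ along $X_{d_j}\to Z$''.

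It remains to analyze $(f_j)|_{X_{d_j}}$, and this is where change of variables and Fubini enter. Using $\varphi_{d_j}^{-1}$ to equip $X_{d_j}$ with the $\cS$-definable map $q:=p_{d_j}\circ\varphi_{d_j}^{-1}\colon X_{d_j}\to Z_{d_j}$ (which is compatible with $X_{d_j}\to Z$ via $Z_{d_j}\to Z$), the map $\varphi_{d_j}\colon Y_{d_j}\to X_{d_j}$ becomes a definable bijection over $Z_{d_j}$ whose fibers $\varphi_{d_j,z}$ are, by \Cref{prop:twisted:boxes}(c), strict $C^1$ isometries of the $d_j$-dimensional box $Y_{d_j,z}$ onto their images; in particular $\ord\Jac\varphi_{d_j,z}=0$, so the Jacobian factor $\LL^{-\ord\Jac\varphi_{d_j}}$ is identically $1$. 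By the change-of-variables part of \cite[Theorem~14.1.1]{CLoes} (resp.~\cite[Theorem~4.1.1]{CLexp}), $(f_j)|_{X_{d_j}}$ is integrable in relative dimension $d_j$ over $Z$ if and only if $\varphi_{d_j}^*\big((f_j)|_{X_{d_j}}\big)\cdot\LL^{-\ord\Jac\varphi_{d_j}}=p_{d_j}^*(F_{d_j,j})$ is integrable in relative dimension $d_j$ over $Z$ along $Y_{d_j}\to Z$, with the same relative integral. Factoring $Y_{d_j}\to Z$ as $Y_{d_j}\to Z_{d_j}\to Z$ (first map $p_{d_j}$; valued-field fiber dimensions $d_j$ and $0$), Fubini reduces this to integrability of $p_{d_j}^*(F_{d_j,j})$ in relative dimension $d_j$ over $Z_{d_j}$ along $p_{d_j}$, together with integrability of $\mu_{Z_{d_j},d_j}\big(p_{d_j}^*(F_{d_j,j})\big)$ in relative dimension $0$ over $Z$ along $Z_{d_j}\to Z$. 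Now $p_{d_j}^*(F_{d_j,j})$ is constant on each box fiber $Y_{d_j,z}$, which has finite motivic volume $\LL^{-\rad_{d_j}(Y_{d_j,z})}=\LL^{-g_{d_j}(z)}$; hence the first condition always holds and, computing the fiberwise integral of a constant over a box and comparing at each point via \Cref{thm:no:null:intro}, one obtains $\mu_{Z_{d_j},d_j}\big(p_{d_j}^*(F_{d_j,j})\big)=F_{d_j,j}\cdot\LL^{-g_{d_j}}$. This is exactly the second half of (ii), so (i) $\Leftrightarrow$ (ii).

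The main obstacle I expect is the bookkeeping around the filtration by relative dimension and the ``more general'' integrability in smaller relative dimension: one has to argue cleanly that a piece supported in relative dimension above $d_j$ can be integrable in relative dimension $d_j$ only if it vanishes identically, that lower-dimensional pieces become $0$ in $C^{d_j}(X\to Z)$, and that the relative change-of-variables and Fubini statements of \cite{CLoes,CLexp} apply verbatim to the nested situation $X_{d_j}\to Z_{d_j}\to Z$ with the (here trivial) Jacobian factor. For $f_j$ in $\cCe(X)$ one proceeds exactly as in the proof of \Cref{prop:twisted:boxes}, writing the functions as finite sums of terms $H_\ell\otimes[\,\cdot\,,\xi_\ell,0]$ with $H_\ell$ in $\cC$ and invoking the $\cCe$-versions of the cited results from \cite{CLexp}.
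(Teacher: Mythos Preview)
Your argument is essentially correct and follows the same underlying strategy as the paper: reduce via the bijections $\varphi_i$ to the box-fibered situation $Y_i\to Z_i$, then use change of variables and the volume-of-a-box computation. The paper's proof, however, is a one-line citation of axioms A7 and A8 of \cite[Theorems 10.1.1, 14.1.1]{CLoes} (resp.\ \cite[Theorems 4.1.1, 4.3.1]{CLexp}); what you have written is precisely an unpacking of what those two axioms deliver.

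There is one structural issue worth flagging: you invoke \Cref{thm:no:null:intro} twice, but in the paper's logical order \Cref{prop:VF-RF} sits in Section~4 and is meant to be available \emph{before} the main theorems are proved in Section~5. This is not circular (the proof of \Cref{thm:no:null:intro} uses only \Cref{prop:twisted:boxes}, not \Cref{prop:VF-RF}), but it is an avoidable forward reference. Both uses can be eliminated: for the identity $\mu_{Z_{d_j},d_j}\big(p_{d_j}^*(F_{d_j,j})\big)=F_{d_j,j}\cdot\LL^{-g_{d_j}}$, this is exactly what axiom A7 (volume of balls) gives you abstractly in $\cC(Z_{d_j})$, no pointwise comparison needed. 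For the vanishing step, note that ``$\Supp F_{ij}$'' is not \emph{a priori} a definable set; the integrability condition only gives a definable $D\subset X$ of relative dimension $\le d_j$ with $f_j\cdot\11_D=f_j$. For $i>d_j$, the set $Y_i\setminus\varphi_i^{-1}(D)$ has, over each $z\in Z_i$, a fiber equal to an $i$-dimensional box minus a set of dimension $<i$, hence nonempty open; a definable section $s\colon Z_i\to Y_i\setminus\varphi_i^{-1}(D)$ of $p_i$ then exists (by cell decomposition), and pulling back $p_i^*(F_{ij})|_{Y_i\setminus\varphi_i^{-1}(D)}=0$ along $s$ gives $F_{ij}=0$ directly.
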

The condition in (i) that $f_j$ in $\cC(X)$ is integrable in relative dimension $d_j$ over $Z$, means two conditions: Firstly, there exists an $\cS$-definable subset $D$ of $X$ of relative dimension $\le d_j$ over $Z$ such that multiplying $f_j$ with the characteristic function of $D$ yields $f_j$ again, and, secondly, the image of $f_j$ in $C^{d_j}(X\to Z)$ lies in $\mathrm{I}_ZC(X\to Z)$ with notation from \cite[Section 14]{CLoes}. For $\cCe$ instead of $\cC$ one uses  \cite[Section 4.3]{CLexp} instead of \cite[Section 14]{CLoes}.

\begin{proof}[Proof of Poposition \ref{prop:VF-RF}]
This follows at once for $\cC$ from an iterated applications of A7 and A8 of Theorems   10.1.1, 14.1.1 of \cite{CLoes}, and, correspondingly for $\cCe$ from Theorems 4.1.1, 4.3.1 of \cite{CLexp} (in the simple case of $\cCe$).
\end{proof}



\section{Proofs of the main results}\label{sec:proofs}
All definitions and auxiliary results for Theorems \ref{thm:no:null:intro} and \ref{thm:integrability-intro} 
have now been developed, so that we can proceed with their proofs.

By taking the difference of $f$ and $g$, it is enough to prove Theorem \ref{thm:no:null:intro} with $g=0$.
We can now jointly prove Theorem \ref{thm:no:null:intro} for $f$ in $\cC(X)$ and in $\cCe(X)$, with $g=0$. The case of $f$ in $\cCexp(X)$ will be reduced to the case of $f$ in $\cCe(X)$ and $g=0$ by Proposition \ref{lem:exp-to-con}, and, it will be finished by Corollary \ref{cor:no:null:main} for $\cCe$.

\begin{proof}[Proof of Theorem \ref{thm:no:null:intro} for $\cC(X)$ and $\cCe(X)$]
Let  $f$ be in $\cC(X)$, resp.~in $\cCe(X)$, such that $f(x)=0$ for all points $x$ on $X$. We need to show that $f=0$ in $\cC(X)$, resp.~in $\cCe(X)$.

\bigskip

\textbf{Case 1.} $X\subset \RF^m$ for some $m\ge 0$.

\bigskip

Briefly, this case follows from
our assumptions on $\Hen$ from Section \ref{sec:recall} (which in particular impose that the residue fields form an elementary class) and logical compactness.  One unwinds relations coming from localisation, from tensor product, and from the Grothendieck ring relations in the residue field part of the definition of $\cC(X)$, resp.~of $\cCe(X)$, to find relations in the free group generated by definable subsets in the residue field sort up to definable bijections.
Logical compactness then allows us get from a statement about all points on $X$ to a statement about the entire $X$. Let us develop this argument in detail.

Let us first treat the case that $f$ lies in $\cC(X)$.
The ring $\cC(X)$ is the localisation of $\cQ(X)$
by the multiplicative system generated by $\LL$ and $(1-\LL^i)$ for integers $i\not=0$, with notation from \Cref{sec:C} and where $\LL$ is the class of $\RF\times X$ (the class of the affine residue field line over $X$) in $\cQ(X)$. Thus, by multiplying $f$ by some power of $\LL$ and some factors of the form  $(1-\LL^i)$ for some nonzero $i$, we may (and will) suppose that $f$ is a difference of the form
$[A] - [B]$
for some $\Hen$-definable sets  $A\subset X\times \RF^{n_1}$ and $B\subset X\times \RF^{n_2}$ for some $n_1,n_2\geq 0$. Indeed, since those factors are units, this modification of $f$ preserves both (\ref{item:1:0}) and (\ref{item:2:0}) from the theorem.

For each point $x$ on $X$, the equation $f(x)=0$ implies, that there is an $\Hen(x)$-definable bijection $\psi_x$ between
the $\Hen(x)$-definable sets
\begin{equation}\label{eq.Ax}
\big(A_{x}\times  \RF^a\times \prod_{i\in I}(\RF^i \setminus \{0\}^i)^{a_i}\big) \sqcup W_x
\end{equation}
and
\begin{equation}\label{eq.Bx}
\big(B_{x} \times \RF^a\times \prod_{i\in I}(\RF^i\setminus \{0\}^i)^{a_i}\big) \sqcup W_x
\end{equation}
for some finite set of positive integers $I$, some integers $a,a_i\geq 0$, and some $\Hen(x)$-definable subset $W_x$ of $\RF^{m'}$ for some $m'\geq 0$, and where $\sqcup$ stands for the disjoint union.
(The expressions \eqref{eq.Ax} and \eqref{eq.Bx} arise from definition of the ring $\cC(X)$:
$\RF^a\times \prod_{i\in I}(\RF^i \setminus \{0\}^i)^{a_i}$ comes from the localization, and $W_x$ arises when passing from the semiring to the ring.)
Note that these data of $\psi_x$, $I$, $a$, $a_i$, $m'$, and $W_x$ depend on $x$. By elimination of valued field quantifiers from \cite{Pas}, these data only involve the residue field sort, with the ring language enriched with some constant symbols. Hence, by logical compactness and the fact that the residue fields of structures in $\Hen$ form an elementary class, we may suppose that $I$, $a$, $a_i$ and $m'$ do not depend on $x$ and that $W_x$ and $\psi_x$ are $\Hen$-definable uniformly in $x$. Hence, by invertibility of $\LL^a \cdot \prod_i(1-\LL^i)^{a_i}$, we find $f=0$. This finishes Case 1 for $f$ in $\cC(X)$. The argument for $f$ in $\cCe(X)$ is similar, using the relations for $\cQe(X)$ instead of $\cQ(X)$.

\bigskip

\textbf{Case 2.} $X \subset \VG^R\times \RF^n$ for some $R,n\ge 0$.

\bigskip

Apply Proposition \ref{lem:rec} to $X$ and $f$ (with $J=\{1\}$, and where we omit the index $j$ over $J$), to find $X_i$, $\theta_i$, and $L_i$ and associated data and properties as given by the Proposition. We may suppose that $X=X_i$ for some $i$.
Pulling back via $\theta_i$, we may suppose that $\theta_i$ is the identity map. Let us write $L$ for $L_i$ and $r$ for $r_i$. Thus, we have $X = A \times \NN^r$ for some $A\subset \Lambda\times \RF^n$ with $\Lambda$ finite, and $f = \sum_{(a,b)\in L}c_{a,b}\cdot g^a \cdot \LL^{g\cdot b}$, with $L \subset \NN^r \times \ZZ^r$ finite. That $\Lambda$ can be taken constant follows from (model theoretic) orthogonality between $\RF$ and $\VG$, which follows easily from Pas's quantifier elimination result in the form of (3.5) and (3.7) of \cite{vdDAx}, see also Theorem 2.1.1 of \cite{CLoes}.

Since the functions $\LL^{g_i}$ are invertible in $\cC(X)$, we may assume that all the exponents $b_{i}$ are non-negative, i.e., $L \subset \NN^{2r}$. (If not, multiply $f$ with suitable powers of the $\LL^{g_i}$; the theorem holds for the new $f$ if and only if it holds for the old $f$.)

We consider $f$ as a polynomial in $g_1$ and in $\LL^{g_1}$, i.e.,
\begin{equation}\label{eq:f}
f = P(g_1, \LL^{g_1})
\end{equation}
where $P \in \cR[x,y]$ is a polynomial with coefficients in the ring
$$\cR := \cC(A)[g_2, \dots, g_r, \LL^{g_2}, \dots, \LL^{g_r}]$$ (or with $\cCe(A)$ instead of $\cC(A)$).

Let $d = d(f)$ be the degree of $P$ in $y$ and let $e = e(f)$ be the degree of $P(x, 0)$ in $x$, where we define the degree of the zero-polynomial to be $-1$. We will proceed by induction on the numbers $r$, $d$, and $e$ (in lexicographical order).

The case $r=0$ is covered by Case 1. Indeed, $\Lambda$ can be sent into $\RF$ by an $\Hen$-definable injection. So, let us suppose that $r>0$.

If $e = -1$, then the function $f'$ obtained by dividing $f$ by $\LL^{g_1}$ also has the form (\ref{eq:f}), but with lower value $d(f')$. By induction, we obtain $f' = 0$, which implies $f = 0$. So from now on we suppose that $e \ge 0$ (which also implies $d \ge 0$).

We can also exclude the case $d = e = 0$, since in that case, $f$ does not depend on $g_1$, so that we can conclude using induction on $r$ (by writing $f$ as the pull-back of a function on $A \times \NN^{r-1}$).

\bigskip

For the remaining cases, define $f\new$ such that
$$f\new(z_1, \dots, z_r) = f(z_1+1, z_2, \dots, z_r) - f(z_1, \dots, z_r).$$
More concretely, $f\new = P\new(g_1, \LL^{g_1})$, where
\[
P\new(x,y) := P(x+1,\LL y) - P(x,y) \in \cR[x,y].
\]
Note that $f\new(x) = 0$ for all points $x$ on $X$.
Also note that $e(f\new) < e$ (the $x^d$-monomial cancels),
so by induction, $f\new$ is zero in $\cC(X)$. By the `furthermore' statement of Proposition \ref{lem:rec},
this implies that $P\new$ is the zero-polynomial.

\bigskip

Let us now first treat the case that $d=0$. (Recall that the case $d = e = 0$ has already been treated, so that we may now assume $e \ge 1$.)
As an intermediate step towards proving $f = 0$, let us derive that the polynomial $e\cdot P \in \cR[x,y]$ is zero. (Note that $\cR$ may have torsion.) Let $c \in \cR$ be the coefficient of the monomial $x^e$ in the polynomial $P$. Then the coefficient of $x^{e-1}$ in $P\new$ is equal to $e \cdot c$.
Since this is equal to zero, we obtain that $e\cdot P(x,0)$ has degree at most $e-1$ in $x$, so that we can apply induction to $e \cdot f$.
This shows that $e \cdot f = 0$ and hence, by the `furthermore' statement of Proposition \ref{lem:rec}, that $e \cdot P = 0$, as claimed.

Now we show, still for the case $d=0$ and $e>0$, that $f=0$.
To this end, for each $i=0,\ldots,e-1$, let $s_i$ be the $\cS$-definable map $A\times \NN^r\to A\times\NN^r $ sending $(a,z_1,\ldots,z_r)$ to $(a,i+ez_1,z_2,\ldots,z_r)$, and set $f_i := s_i^{*}(f)$. Then clearly $f_i$ evaluates to $0$ at every point on $X$, and it is of the form (\ref{eq:f}), that is, $f_i = P_i(g_1)$, with $P_i(x) = P(i + ex)$. (Recall that we are in the case $d = 0$, so that $P$ does not depend on $y$.) Using that $e \cdot P = 0$, we obtain $P_i(x) = P(i)$, namely as follows: Writing $P$ as $P = \sum a_\ell x^\ell$ (with $a_\ell \in \cR$), we have $e\cdot a_\ell = 0$ for each $\ell$, so when multiplying out $\sum a_\ell (i + ex^\ell)^\ell$, everything vanishes except $\sum a_\ell i^\ell$.

Hence, we find, by the case $d = e = 0$, that $f_i=0$ for all $i$. Using that $f$ can be expressed in terms of the $f_i$, we deduce that $f = 0$. Indeed, $f$ is the sum (over $i$) of the functions obtained by pulling $f_i$ back using the inverse of $s^*_i$ (which is defined on the image of $s_i$), and then extending this pull-back by $0$ on the complement of the image of $s_i$.

\bigskip

Finally, we treat the case $d > 0$. Let
$c \cdot x^{e'}y^d$ be the monomial of $P$ of maximal degree in $x$, among those of degree $d$ in $y$ (for some $c \in \cR \setminus \{0\}$). Then the coefficient of $x^{e'}y^d$ in $P\new$ is $(\LL^d - 1)\cdot c$. So $(\LL^d - 1)\cdot c = 0$, and since $(\LL^{d}-1)$ is invertible, we deduce $c = 0$, contradicting the choice of $e'$.

This finishes the proof of Case 2.

\bigskip

\textbf{Case 3.}  General case.

\bigskip

In this case, $X$ is a general $\cS$-definable set. One has  $X\subset \VF^n\times \RF^m\times \VG^R$ for some $n,m,R\ge 0$.  By Proposition \ref{prop:twisted:boxes} applied to $f_1=f$ and with $J=\{1\}$ and $Z=\RF^m\times \VG^R$, we obtain finitely many function $F_{i1}$ from item (d) of Proposition \ref{prop:twisted:boxes}. By the properties provided by that proposition, it is now enough to treat the case that $f$ is in fact equal to one of the $F_{i1}$. We may thus suppose that $n=0$, meaning that  $X\subset \RF^m\times \VG^R$ for some $m$ and $R$. (Alternatively, use Theorem 7.2.1(2) of \cite{CLoes} for such a reduction.) But this now falls under the scope of Case 2. The proof of Theorem \ref{thm:no:null:intro} for $\cC(X)$ and for $\cCe(X)$ is finished.
\end{proof}

Note that by the above proofs, we may from now on use Corollary \ref{cor:no:null:main} in the cases of $\cC$ and $\cCe$.

\begin{proof}[Proof of Theorem \ref{thm:no:null:intro} for $\cCexp(X)$]
Let $f$ be in $\cCexp(X)$ such that $f(x)=0$ for each point $x$ on $X$. We need to show that $f=0$.
We will use Proposition 
\ref{lem:exp-to-con} and Corollary \ref{cor:no:null:main} to reduce to the case that $f$ lies in $\cCe(X)$, for which we already proved Theorem \ref{thm:no:null:intro}.
Let $p:\widetilde X\to X$, $\tilde f \in \cCexp(\widetilde X)$, and $c \in \cCe(\widetilde X)$ be as given by Proposition \ref{lem:exp-to-con}.
By our assumption that $f(x)=0$ for each point $x$ on $X$, we find that $\tilde f(\tilde x)=0$ for each $\tilde x$ in $\widetilde X$ by  (b) of Proposition \ref{lem:exp-to-con}. Hence, by the first part of Proposition \ref{prop:crit} we find that $c(\tilde x)=0$ for each $\tilde x$ in $\widetilde X$.    
Hence, by Corollary \ref{cor:no:null:main} (for the case $\cCe$) we find that $c=0$.
This clearly implies that $\tilde f=0$ (again by the first part of Proposition \ref{prop:crit}), which in turn implies $f = \mu_{X,0}(\tilde f)=0$, as desired, with $\mu_{X,0}$ as in \Cref{lem:exp-to-con}. This finishes the proof of Theorem \ref{thm:no:null:intro}. \end{proof}

We end the paper with the proof of Theorem \ref{thm:integrability-intro}.

\begin{proof}[Proof of Theorem \ref{thm:integrability-intro}]   
By the definition of integrability over $Z$ along $g$, statement (1) of the theorem implies statement (2). Let us prove the reverse implication. So, we are given $f$ and $g$ such that for each point $z$ on $Z$, the restriction $f_{|g^{-1}(z)}$ is integrable in dimension $d$.

\bigskip

\textbf{Case 1.} $X\subset \VG^{R} \times \RF^n \times Z$ for some $R\ge 0$ and some $n\ge 0$, $g:X\to Z$ is the projection, and $d=0$.

\bigskip

Apply Proposition~\ref{lem:rec} to $f_1=f$ with $J=\{1\}$ to obtain maps $\theta_i$ and corresponding objects $L_i$, $r_i$, $c_{a,b}$, etc., as in the proposition, where we omit the index $j \in J$.
By finite additivity we may suppose that $X=X_i$ for some $i$. 
By pulling back under $\theta_i$ we may in fact suppose that $\theta_i$ is the identity map; in other words, without loss, $f$ itself is of the form \eqref{f:sum:int:prop} given in Proposition~\ref{lem:rec}.
We are given that for any point $z$ in $Z$, the restriction $f_{|g^{-1}(z)}$ is integrable. Thus, Proposition \ref{prop:crit} (applied to this restriction) implies that $c_{a,b|g^{-1}(z)}=0$ for each point $z$ on $Z$ and each $(a,b) \notin \NN^{r_i} \times (\ZZ \setminus \NN)^{r_i}$. Using Corollary \ref{cor:no:null:main}, one obtains that $c_{a,b}=0$ for those $(a,b)$. Hence, $f$ is integrable in relative dimension $0$  over $Z$ along $g$, by Proposition \ref{prop:crit} (or just by the definition of integrability). This finishes Case 1.
\\

\textbf{Case 2.} General case.    \\

In this case $X$ is a general $\cS$-definable set.
The case where $f$ lies in $\cC(X)$ or $\cCe(X)$ reduces immediately to Case~1
by Propositions \ref{prop:twisted:boxes} and \ref{prop:VF-RF} (see Case~3 of the proof of Theorem \ref{thm:no:null:intro}).

Let us finally treat the case that $f$ is in $\cCexp(X)$. We reduce to the previous case of $\cCe$ instead of $\cCexp$, again by Proposition \ref{lem:exp-to-con}, as follows.
Suppose that for each point $z$ on $Z$, the restriction $f_{|g^{-1}(z)}$ is integrable in dimension $d$.
Let $\widetilde X$, $p$, $c$, and $\tilde f$ be given by Proposition \ref{lem:exp-to-con}.
By (c) of Proposition \ref{lem:exp-to-con}, it is enough to prove that  $\tilde f$ is integrable in relative dimension $d$ over $Z$ along $g\circ p$.  By (c) of Proposition \ref{lem:exp-to-con}, now applied with  $Z$ replaced by $\{z\}$ for a point  $z$ on $Z$ and $f$ replaced by $f_{|g^{-1}(z)}$,  it follows that  $\tilde f_{|(g\circ p)^{-1}(z)}$ is integrable in dimension $d$. By the equivalence of (i) with (iv) from Proposition \ref{prop:crit}, it follows that $c_{|(g\circ p)^{-1}(z)}$ is integrable in dimension $d$ for each point $z$ on $Z$.  Hence, by the already proved part of Case~2 for $\cCe$, applied to $c$, we find that $c$ is integrable in relative dimension $d$ over $Z$ along $g\circ p$.  Again by the equivalence of (i) with (iv) from Proposition \ref{prop:crit}, we find that $\tilde f$ is integrable in relative dimension $d$ over $Z$ along $g\circ p$.
This finishes the proof of Theorem \ref{thm:integrability-intro}.
\end{proof}

\begin{remark}
As mentioned, our results and proofs have natural adaptations to the related frameworks of  Section 3.1 of \cite{CLbounded} and the corresponding ones from \cite{Kien:rational}, since the key points behind Theorems \ref{thm:no:null:intro} and \ref{thm:integrability-intro} are the facts that the residue fields form an elementary class, that the residue field is orthogonal (in the model theoretic sense) to the value group  which carries the pure Presburger structure, and, the reduction to situations without valued field variables essentially enabled by cell decomposition \cite{Pas}.
More general (future) adaptations to the more powerful axiomatic framework of hensel-minimality of \cite{CHR} is possible under the corresponding conditions, which come partly for free under hensel minimality.
\end{remark}

\bibliographystyle{amsplain}
\bibliography{anbib}

\end{document}